\newtheorem{theorem}{Theorem}[section]
\newtheorem{corollary}[theorem]{Corollary}
\newtheorem{lemma}[theorem]{Lemma}
\newtheorem{proposition}[theorem]{Proposition}
\theoremstyle{definition}
\newtheorem{definition}[theorem]{Definition}
\newtheorem{example}[theorem]{Example}
\newtheorem{remark}[theorem]{Remark}
\newcommand{\rep}{\textnormal{rep}}
\newcommand{\val}{\textnormal{val}}
\newcommand{\F}{\mathbb F}
\newcommand{\N}{\mathbb N}
\newcommand{\Q}{\mathbb Q}
\newcommand{\Z}{\mathbb Z}
\newcommand{\nequiv}{\mathrel{\not\equiv}}
\newcommand{\colonequal}{\mathrel{\mathop:}=}
\begin{document}

\title{Profinite automata}

\date{November 12, 2016}

\author{{\bf Eric Rowland}\footnote{Supported by a Marie Curie Actions COFUND fellowship.}\\
University of Liege, Belgium\\
Hofstra University, Hempstead, NY, USA \\
\\
{\bf Reem Yassawi}\\
Trent University, Peterborough, Canada\\
IRIF,   CNRS UMR 8243, Universit\'{e}  Paris-Diderot, 
France}
\maketitle

\begin{abstract}
Many sequences of $p$-adic integers project modulo $p^\alpha$ to $p$-automatic sequences for every $\alpha \geq 0$.
Examples include algebraic sequences of integers, which satisfy this property for every prime $p$, and some cocycle sequences, which  we show satisfy this property for a fixed $p$.
For such a sequence, we construct a profinite automaton that projects modulo $p^\alpha$ to the automaton generating the projected sequence.
In general, the profinite automaton has infinitely many states.
Additionally, we consider the closure of the orbit, under the shift map, of the $p$-adic integer sequence, defining a shift dynamical system.
We describe how this shift is a letter-to-letter coding of a  shift generated by a constant-length substitution  defined on an uncountable alphabet, and we establish some dynamical properties of these shifts.
\end{abstract}

\section{Introduction}\label{Introduction}

A {\em substitution} (or {\em morphism}) on an alphabet $\mathcal A$ is a map $\theta: \mathcal A \rightarrow \mathcal A^*$, extended to $\mathcal A^\N$ by concatenation.
A substitution is {\em length-$k$} (or \emph{$k$-uniform}) if, for each $a \in \mathcal A$, the length of $\theta(a)$ is $k$. The extensive literature on substitutions has traditionally focused on the case where $\mathcal A$ is finite. Some exceptions include recent work, for example in \cite{ferenczi} and \cite{mauduit}.
Substitutions on a countably infinite alphabet have been used to describe lexicographically least sequences on $\N$ avoiding certain patterns~\cite{Guay-Paquet--Shallit, Rowland--Shallit}, and they have been used in the combinatorics literature to enumerate permutations avoiding patterns~\cite{west}.

In this article we present a new construction for constant-length substitutions on an uncountable alphabet.
Our motivation comes from the following classical results.
Let  $( a(n))_{n \geq 0}$ be an {\em automatic sequence} (see Definition~\ref{dfao}). Cobham's theorem (Theorem~\ref{Cobham})  characterizes an automatic sequence as the coding, under a letter-to-letter map, of a fixed point of a constant-length substitution. Christol's theorem (Theorem~\ref{Christol}) characterizes $p$-automatic sequences for prime $p$; they are precisely the sequences whose generating function is algebraic over a finite field of characteristic $p$. The following can be viewed as a generalization of one direction of Christol's characterization.

\begin{theorem}[{\cite[Theorem 32]{Christol},  \cite[Thereom 3.1]{Denef--Lipshitz}}]\label{Denef--Lipshitz_1}
Let  $(a(n))_{n\geq 0}$ be a sequence of $p$-adic integers such that $\sum_{n \geq 0} a(n) x^{n}$ is algebraic over $\Z_p(x)$, and let $\alpha \geq 0$.
Then $(a(n) \bmod p^\alpha)_{n \geq 0}$ is $p$-automatic. 
\end{theorem}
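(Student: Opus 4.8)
The plan is to show that the $p$-kernel of $(a(n)\bmod p^\alpha)_{n\geq 0}$ is finite; by the classical characterization of $p$-automatic sequences through finiteness of the $p$-kernel, this is enough. For a power series $G=\sum_{n\geq 0}g(n)x^n$ and $0\leq r<p$, write $\Lambda_r G=\sum_{n\geq 0}g(pn+r)x^n$, so that the $p$-kernel of $(g(n))_n$ corresponds to the closure of $\{G\}$ under $\Lambda_0,\dots,\Lambda_{p-1}$. I will use three elementary facts about these operators: each $\Lambda_r$ is additive; each $\Lambda_r$ commutes with reduction modulo any integer; and if a power series is algebraic over $\Z_p(x)$, then so is each of its images under $\Lambda_r$, as one sees by solving the linear system $G(\zeta^j x)=\sum_i\zeta^{ij}x^i(\Lambda_i G)(x^p)$, $0\leq j<p$, over the finite extension of $\Z_p(x)$ obtained by adjoining a primitive $p$-th root of unity $\zeta$ (which exists since $\Z_p(x)$ has characteristic zero). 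In particular every iterated section $\Lambda_{r_k}\cdots\Lambda_{r_1}F$ lies in $\Z_p[[x]]$ and is algebraic over $\Z_p(x)$.

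I would induct on $\alpha$, with $\alpha=0$ (the all-zero sequence) and $\alpha=1$ as the base. For $\alpha=1$, clearing denominators in an algebraic relation for $F$ and dividing out the largest power of $p$ common to all coefficients produces a primitive $P\in\Z_p[x][y]$ with $P(x,F)=0$; then $\bar P:=P\bmod p\in\F_p[x][y]$ is nonzero with positive $y$-degree (otherwise the nonzero polynomial $\bar P(x)$ would vanish as a power series) and $\bar P(x,\bar F)=0$ for $\bar F:=F\bmod p$. So $\bar F$ is algebraic over $\F_p(x)$, and Christol's theorem (Theorem~\ref{Christol}) gives that $(a(n)\bmod p)_n$ is $p$-automatic.

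For the inductive step, assume the statement for $\alpha-1$. By the base case $\bar F=F\bmod p$ is $p$-automatic; let $\{\bar F_0=\bar F,\bar F_1,\dots,\bar F_{t-1}\}$ be its $p$-kernel, with transition map $\delta$ satisfying $\Lambda_r\bar F_i=\bar F_{\delta(i,r)}$. Pick words $\vec{s}_i$ with $\Lambda_{\vec{s}_i}\bar F=\bar F_i$ and set $F_i:=\Lambda_{\vec{s}_i}F\in\Z_p[[x]]$; each $F_i$ is algebraic over $\Z_p(x)$ and reduces mod $p$ to $\bar F_i$, so for all $i,r$ the power series $H_{i,r}:=(\Lambda_r F_i-F_{\delta(i,r)})/p$ lies in $\Z_p[[x]]$ and is algebraic over $\Z_p(x)$. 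Applying the inductive hypothesis to the finitely many $H_{i,r}$, each $(H_{i,r}\bmod p^{\alpha-1})_n$ is $p$-automatic; let $\mathcal{B}\subseteq(\Z/p^{\alpha-1}\Z)^{\N}$ be the finite union of their $p$-kernels, and let $\mathcal{D}$ be the additive subgroup of $(\Z/p^{\alpha-1}\Z)^{\N}$ it generates. The crucial point is that $\mathcal{D}$ is finite — being finitely generated of exponent dividing $p^{\alpha-1}$ — and is closed under every $\Lambda_r$, since $\Lambda_r$ is additive and maps $\mathcal{B}$ into $\mathcal{B}$.

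It then remains to verify, by induction on the length of a word $\vec{s}$ over $\{0,\dots,p-1\}$, that $\Lambda_{\vec{s}}F=F_{\phi(\vec{s})}+pG_{\vec{s}}$, where $\phi(\vec{s})$ is the state reached from $0$ on reading $\vec{s}$ and $G_{\vec{s}}\bmod p^{\alpha-1}\in\mathcal{D}$; the inductive step follows from the identity $\Lambda_r(F_{\phi(\vec{s})}+pG_{\vec{s}})=F_{\phi(\vec{s}r)}+p\bigl(H_{\phi(\vec{s}),r}+\Lambda_r G_{\vec{s}}\bigr)$, using $H_{\phi(\vec{s}),r}\bmod p^{\alpha-1}\in\mathcal{B}\subseteq\mathcal{D}$ and the closure of $\mathcal{D}$ under $\Lambda_r$ and addition. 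Consequently every element of the $p$-kernel of $(a(n)\bmod p^\alpha)_n$ lies in the finite set $\{(F_i\bmod p^\alpha)+pw:\ 0\leq i<t,\ w\in\mathcal{D}\}$, so that $p$-kernel is finite and $(a(n)\bmod p^\alpha)_n$ is $p$-automatic. I expect the genuinely delicate point to be this last bookkeeping: the ``divided'' remainders $G_{\vec{s}}$ must stay within a fixed finite set as $\vec{s}$ grows, which works only because reduction modulo $p^{\alpha-1}$ collapses the finitely generated group they span to a finite one — and that group is finitely generated precisely because of the inductive hypothesis applied to the auxiliary series $H_{i,r}$.
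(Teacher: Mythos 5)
The paper does not actually prove this statement: it is quoted as \cite[Theorem 32]{Christol} (the result is also commonly attributed to Denef and Lipshitz), so there is no internal proof to compare against, and your argument must be judged on its own. Judged that way, it is correct, and it takes a different route from the cited literature, where one typically shows that an algebraic series over $\Z_p$ is congruent modulo $p^\alpha$ to the diagonal of a rational power series and then proves automaticity of such diagonals by a finiteness argument on representations of bounded degree. Your proof is instead a direct induction on $\alpha$: the base case $\alpha=1$ is Theorem~\ref{Christol} after a content argument (your handling of the degenerate case where the reduced relation has $y$-degree $0$ is right), and the inductive step lifts the finite kernel of $F \bmod p$ by introducing the divided series $H_{i,r}=(\Lambda_r F_i - F_{\delta(i,r)})/p$, applying the inductive hypothesis to them, and confining the error terms $G_{\vec s}$ modulo $p^{\alpha-1}$ to the group $\mathcal D$ generated by the kernels of the $H_{i,r}$, which is finite because it is finitely generated of exponent dividing $p^{\alpha-1}$ and is $\Lambda_r$-stable by additivity. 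The supporting facts all check out: the $\Lambda_r$ preserve algebraicity over $\Z_p(x)$ in characteristic zero via your root-of-unity interpolation (the passage from $(\Lambda_i G)(x^p)$ to $\Lambda_i G$ uses $[K(x):K(x^p)]<\infty$ and injectivity of $x\mapsto x^p$, and algebraicity descends from $\Q_p(\zeta)(x)$ to $\Z_p(x)$ by transitivity), they commute with reduction, and the word-induction identity is valid provided you take $\vec s_0$ to be the empty word (so $F_0=F$, $G_\epsilon=0$) and read $\Lambda_{\vec s r}=\Lambda_r\circ\Lambda_{\vec s}$ --- a convention worth stating explicitly. What your approach buys is a self-contained, elementary proof resting only on Christol's theorem and Eilenberg's kernel criterion; what it gives up relative to the diagonal-representation proofs is any explicit description of the reductions and any bound on the automaton size beyond $t\,|\mathcal D|$.
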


Thus certain  $p$-adic integer sequences (and, in particular, integer sequences) have the property that they become $p$-automatic when reduced modulo $p^\alpha$, for every $\alpha \geq 0$.
More generally, the diagonal of a multivariate rational power series is $p$-automatic when reduced modulo $p^\alpha$, and one can explicitly compute an automaton for $(a(n) \bmod p^\alpha)_{n \geq 0}$ for all but finitely many primes $p$~\cite[Theorem 2.1]{Rowland--Yassawi}.

Fix a prime $p$, and let $(a(n))_{n\geq 0}$ be a sequence such that $(a(n) \bmod p^\alpha)_{n \geq 0}$ is $p$-automatic for every $\alpha \geq 0$.
For each $\alpha$, there is a finite automaton generating $(a(n) \bmod p^\alpha)_{n \geq 0}$.
In Lemma~\ref{inverse limit of automata} we show that these automata can be chosen in a compatible way; namely, their inverse limit exists.
In this way we obtain a \emph{profinite automaton}  (Definition~\ref{profinite_automaton}) generating the sequence $(a(n))_{n\geq 0}$.

We can obtain other inverse limit objects from a $p$-adic integer sequence in a similar way.
In particular, Cobham's theorem guarantees a length-$p$ substitution $\theta_\alpha$ such that $(a(n) \bmod p^\alpha)_{n \geq 0}$ is a coding of a fixed point of $\theta_\alpha$.
Each substitution $\theta_\alpha$ is a substitution on a finite alphabet, but their inverse limit is a profinite substitution on an alphabet that is, in general, uncountable (Theorem~\ref{substitution_theorem}).
This alphabet has a natural coding to the set $\Z_p$ of $p$-adic integers, and the   sequence $(a(n))_{n\geq 0}$ is the coding of a fixed point of the profinite substitution.

With this profinite substitution, we obtain a shift (Theorem~\ref{commuting_diagram_lemma_2}), as in the classical finite-alphabet case. This shift is the closure of the orbit, under the shift map, of a fixed point (or coding of a fixed point) of the profinite substitution.
One feature of profinite substitutions is that their shifts live in a compact topological space, and many of the classical results on primitive substitution shifts carry through to our setting. For example, if we assume that each substitution $\theta_\alpha$ is primitive, then  the profinite shift is both minimal (Proposition~\ref{minimal}) and uniquely ergodic (Corollary~\ref{choksi_corollary}). With the same assumption, the maximal equicontinuous factor of the profinite shift is an odometer, and any measurable eigenvalue is continuous (Theorem~\ref{discrete_spectrum}). Finally, profinite substitutions are recognizable (Theorem~\ref{recognizable}).
However it is not clear when  a $p$-adic integer sequence generates primitive substitutions  $\theta_\alpha$. In one example we consider,  not only are  the substitutions $\theta_1$ and $\theta_2$ not primitive,  but their shifts contain shift-periodic sequences (Example~\ref{Catalan modulo 2}).

In Section~\ref{limit sets}, we discuss how profinite substitutions allow us to view certain $p$-adic limits in terms of attractor sets for dynamical systems.
For example, consider the Fibonacci sequence $F(n)_{n\geq 0} = 0, 1, 1, 2, 3, 5, 8, 13, \dots$.
The following graphic shows the hundred least significant binary digits of $F(2^n)$ for each $0 \leq n \leq 20$, where $0$ is represented by a white cell, $1$ is represented by a black cell, and digits increase in significance to the left.
\begin{center}
	\scalebox{.5}{\includegraphics{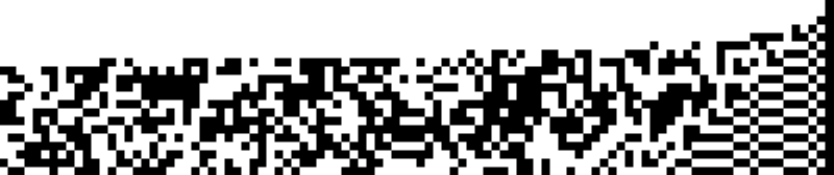}}
\end{center}
As $2$-adic integers, the two limit points of the sequence $F(2^n)_{n\geq 0}$ are $\pm \sqrt{-\frac{3}{5}}$ in $\Z_2$~\cite{Rowland--Yassawi_constant-recursive}.
Similar behavior is seen for the sequence $C(n)_{n \geq 0} = 1, 1, 2, 5, 14, 42, 132, 429, \dots$ of Catalan numbers, where $C(n) = \frac{1}{n+1} \binom{2n}{n}$.
The following shows the binary digits of $C(2^n)$ for $0 \leq n \leq 20$.
\begin{center}
	\scalebox{.5}{\includegraphics{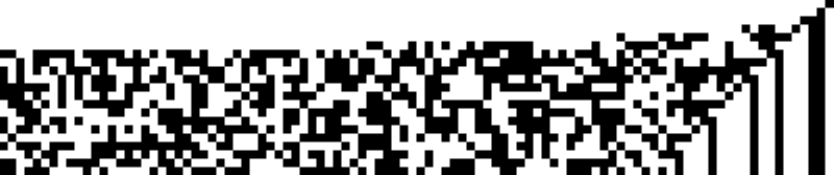}}
\end{center}
As this graphic suggests, the sequence $C(2^n)_{n \geq 0}$ converges in $\Z_2$~\cite{michel_miller_rennie}.
These limits are elements in the limit sets of certain profinite shifts. In particular, such a limit is a component of an inverse limit of $\theta_\alpha$-periodic points.

The outline of the article is as follows.
In Section~\ref{Notation} we recall the major facts about automatic sequences that we shall use.
In Section~\ref{generated by automatic sequences} we construct profinite automata as inverse limits of finite automata. In Section~\ref{dynamical properties} we define profinite substitutions and their shifts, and we establish various dynamical properties  of the latter.   
In Section~\ref{Cocycle sequences} we study a family of sequences  that arise in a dynamical context (from cocycle maps),
 which are not algebraic, but which for a single prime $p$ generate an inverse limit substitution dynamical system (Theorem~\ref{cocycle}).

\section{Background on automatic sequences}\label{Notation}

In this section we establish notation and give the necessary properties of automatic sequences.

\subsection{Finite automata and automatic sequences}

We first give the formal definition of an automaton.

\begin{definition}\label{dao}
Let $k \geq 2$.
A {\em $k$-deterministic  automaton with output} ($k$-DAO) is a 6-tuple  $(\mathcal S, \Sigma_{k}, \delta, \overline{s}, \mathcal A, \tau)$, where $\mathcal S$ is a set of ``states'', $\overline{s} \in \mathcal S$ is the {\em initial state}, $\Sigma_k=\{0,1, \ldots, k-1\}$,
$\mathcal A$ is an alphabet,
$\tau:\mathcal S\rightarrow \mathcal A$ is the \emph{output function}, and $\delta:\mathcal S\times \Sigma_{k}\rightarrow \mathcal S$ is the {\em transition function}.
\end{definition}

In this section we shall be concerned with \emph{finite} automata.

\begin{definition}\label{dfao}
Let $k \geq 2$.
A {\em $k$-deterministic finite automaton with output} ($k$-DFAO) is a $k$-DAO whose set $\mathcal S$ of states is finite.
\end{definition}

The function $\delta$ extends to the domain $\mathcal S \times \Sigma_k^*$ by defining $\delta(s, \epsilon) \colonequal s$ for the empty word $\epsilon$ and recursively defining $\delta(s, n_\ell n_{\ell-1} \cdots n_0) \colonequal \delta(\delta(s, n_\ell), n_{\ell-1} \cdots n_0)$.
Given a natural number $n$ and an integer $k \geq 2$, we write $\rep_k(n) = n_\ell  \cdots n_1 n_0\in \Sigma_k^*$ for the standard base-$k$ representation of $n$ where $n = n_\ell k^\ell + \dots + n_1 k + n_0$  and $n_\ell\neq 0$.
We can feed $\rep_k(n)$, beginning with the most significant digit $n_\ell$, into an automaton as follows. 
(Recall that the standard base-$k$ representation of $0$ is the empty word.)

\begin{definition}\label{automatic sequence}
A sequence $( a(n))_{n \geq 0}$ of elements in $\mathcal A$ is {\em $k$-automatic} if there is a $k$-DFAO  
$\mathcal M = (\mathcal S,\Sigma_{k},\delta, \overline{s}, \mathcal A, \tau)$ such that $a(n) = \tau(\delta(\overline{s}, \rep_k(n)))$ for each $n \geq 0$.
\end{definition}

For ease of notation we will write  $ a(n)_{n \geq 0}$ for $( a(n))_{n \geq 0}$.
When we obtain an automatic sequence  $a(n)_{n \geq 0}$ as in Definition~\ref{automatic sequence} by reading the most significant digit first, we say that the automaton  generates $a(n)_{n \geq 0}$ in {\em direct reading}.
In this article our automata will always generate sequences in direct reading. (This is in contrast to the automata in \cite{Rowland--Yassawi}, where our algorithms produced machines that generated the desired sequence by reading the least significant digit first.)

\begin{example}\label{Catalan modulo 4}
Consider the following automaton for $k = 2$.
Each of the six states is represented by a vertex, labelled with its output under $\tau$.
Edges between vertices illustrate $\delta$.
The unlabelled edge points to the initial state.
\begin{center}
	\includegraphics[scale=.8]{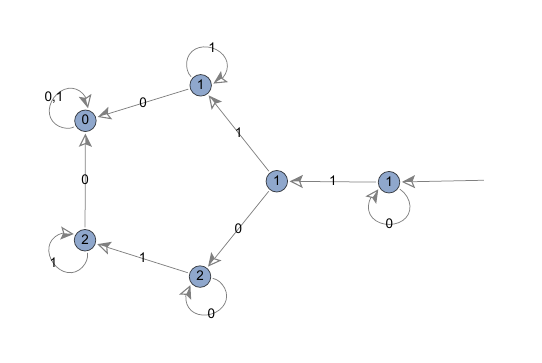}
\end{center}
The $2$-automatic sequence produced by this automaton in direct reading is
\[
	a(n)_{n \geq 0} = 1, 1, 2, 1, 2, 2, 0, 1, 2, 2, 0, 2, 0, 0, 0, 1, \dots.
\]
\end{example}

\subsection{Some characterizations of automaticity}

\begin{definition}\label{substitution}
Let $\mathcal A$ be an alphabet, which we do not assume to be finite. A {\em substitution}  is a map $\theta: {\mathcal A} \rightarrow {\mathcal
A}^*$.
The map  $\theta$ extends  to maps $\theta: {\mathcal
A}^* \rightarrow {\mathcal A}^*$ and $\theta: {\mathcal A}^\N \rightarrow {\mathcal A}^\N$ by concatenation:
$\theta(a(n)_{n\geq 0}) \colonequal \theta(a(0))
\cdots \theta(a(k))\cdots$. 
If there is some $k$ with $|\theta(a)|=k$ for each $a \in \mathcal A$, then we say that $\theta$ is a {\em length-$k$} substitution.
A {\em fixed point} of $\theta$ is a sequence $ a(n)_{n \geq 0} \in {\mathcal A}^\N$ such that $\theta(  a(n)_{n \geq 0}     ) = a(n)_{n \geq 0}$.
Let $\tau: \mathcal A \rightarrow \mathcal B$ be a map; it induces a {\em letter-to-letter projection} $\tau:\mathcal A^{\N} \rightarrow \mathcal B^{\N}$. 
\end{definition}

Given a letter $s$ such that $\theta(s)$ starts with $s$, the words $\theta^n(s)$ converge, as $n\rightarrow\infty$, to a fixed point of $\theta$.
We recall Cobham's theorem~\cite{C}, which  gives us
the following characterization of automatic sequences.

\begin{theorem}\label{Cobham}
A sequence  is $k$-automatic if and only  if it is the image, under a letter-to-letter projection, of a fixed point of a length-$k$ substitution on a finite alphabet.
\end{theorem}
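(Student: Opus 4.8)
The plan is to build an explicit dictionary between $k$-DFAOs and length-$k$ substitutions and to run it in each direction; the one delicate point, common to both directions, is to arrange that feeding leading zeros into an automaton changes nothing, so that reading the standard representation $(n)_k$ agrees with reading any zero-padded base-$k$ representation of $n$.

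For the direction ``$k$-automatic $\Rightarrow$ coding of a fixed point'', suppose $a(n)_{n\geq 0}$ is generated in direct reading by a $k$-DFAO $(\mathcal S,\Sigma_k,\delta,s_0,\mathcal A,\tau)$. I would first pass to an equivalent automaton (generating the same sequence) in which $\delta(s_0,0)=s_0$: adjoin a fresh initial state $s_0'$ with $\tau(s_0')\colonequal\tau(s_0)$, put $\delta(s_0',0)\colonequal s_0'$, and put $\delta(s_0',j)\colonequal\delta(s_0,j)$ for $1\leq j\leq k-1$; since standard representations carry no leading zero, this alters neither $\delta(s_0,(n)_k)$ for $n\geq 1$ nor the output at $n=0$. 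Now define the length-$k$ substitution $\varphi$ on the (still finite) state alphabet by $\varphi(s)\colonequal\delta(s,0)\,\delta(s,1)\cdots\delta(s,k-1)$. Because $\delta(s_0,0)=s_0$, the word $\varphi(s_0)$ begins with $s_0$, so (using $k\geq 2$) the prefixes $\varphi^j(s_0)$ strictly increase to a fixed point $u(n)_{n\geq 0}$ of $\varphi$. An induction on $j$, peeling off one base-$k$ digit in the inductive step via $n=km+r$, shows that for $0\leq n<k^j$ the $n$th letter of $\varphi^j(s_0)$ is $\delta(s_0,w)$, where $w$ is the length-$j$ base-$k$ word for $n$. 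Letting $j\to\infty$ and using $\delta(s_0,0)=s_0$ to strip the leading zeros of $w$ gives $u(n)=\delta(s_0,(n)_k)$, hence $\tau(u(n))=a(n)$; thus $a$ is the letter-to-letter image of the fixed point $u$ under $\tau$.

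For the converse, suppose $a(n)_{n\geq 0}=\tau(u(n)_{n\geq 0})$, where $u(n)_{n\geq 0}$ is a fixed point of a length-$k$ substitution $h\colon\mathcal B\to\mathcal B^k$ on a finite alphabet $\mathcal B$ and $\tau\colon\mathcal B\to\mathcal A$. Set $b_0\colonequal u(0)$. Since $h(u(n)_{n\geq 0})=u(n)_{n\geq 0}$, the block $h(u(n))$ occupies positions $kn,\dots,kn+k-1$, so $u(kn+r)$ equals the $r$th letter of $h(u(n))$ for $0\leq r<k$; in particular $h(b_0)$ begins with $b_0$. I then build a $k$-DFAO with state set $\mathcal B$, initial state $b_0$, output function $\tau$, and $\delta(b,r)$ defined to be the $r$th letter of $h(b)$. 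An induction on $|w|$ shows $\delta(b_0,w)=u(n)$ for every base-$k$ word $w$ representing $n$ (leading zeros allowed): if $w=w'r$ with $w'$ representing $m$, then $\delta(b_0,w'r)=\delta(u(m),r)$ equals the $r$th letter of $h(u(m))$, which is $u(km+r)=u(n)$. Specializing to $w=(n)_k$ gives $\tau(\delta(b_0,(n)_k))=\tau(u(n))=a(n)$, so $a$ is $k$-automatic.

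The only genuine obstacle is the leading-zero bookkeeping just described; once $\delta(s_0,0)=s_0$ is in place (respectively, once one notes that $h(b_0)$ starts with $b_0$), each direction is a routine one-digit-at-a-time induction on base-$k$ representations. I note also that finiteness of $\mathcal B$ is what makes the automaton constructed in the converse direction a legitimate $k$-DFAO; in the classical statement of Cobham's theorem the substitution alphabet is finite by hypothesis, so this causes no difficulty here.
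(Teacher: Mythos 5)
Your proof is correct, and it matches the paper's treatment as closely as possible: the paper does not prove Theorem~\ref{Cobham} (it cites Cobham~\cite{C}), but the construction it sketches immediately afterwards --- $\theta(s)\colonequal\delta(s,0)\delta(s,1)\cdots\delta(s,k-1)$, with a new letter $s_0'$ introduced when $\delta(s_0,0)\neq s_0$ --- is exactly your forward direction, the only cosmetic difference being that you repair the leading-zero issue by adjoining a fresh initial state to the automaton rather than a fresh letter to the substitution alphabet. Your converse is the standard reverse construction ($\delta(b,r)\colonequal$ the $r$th letter of $h(b)$) with the correct digit-by-digit induction, and your closing remark about finiteness of the substitution alphabet is apt, since the paper's Definition~\ref{substitution} allows infinite alphabets while the classical theorem being invoked assumes a finite one.
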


 If a sequence $a(n)_{n \geq 0}$ is generated in direct reading by $\mathcal M  = (\mathcal S, \Sigma_{k},\delta, \overline{s}, \mathcal A, \tau)$, then the transition map $\delta$ gives us the substitution described by Cobham's theorem.
Namely, let $\theta(s) \colonequal \delta(s,0) \delta(s,1)\cdots  \delta(s,k-1)$ for each state $s \in \mathcal S$.
If $\delta(\overline{s}, 0) \neq \overline{s}$, introduce a new letter $\overline{s}'$ and let $\theta(\overline{s}') \colonequal \overline{s}' \delta(\overline{s},1) \cdots \delta(\overline{s},k-1)$.

\begin{example}\label{substitution construction}
Let us construct a substitution $\theta$ and projection $\tau$ that generate the sequence produced by the automaton $\mathcal M$ in Example~\ref{Catalan modulo 4} in direct reading.
Removing the output function from $\mathcal M$ gives the following automaton, where we have named the states $s_0, s_1, \dots, s_5$.
\begin{center}
	\includegraphics[scale=.8]{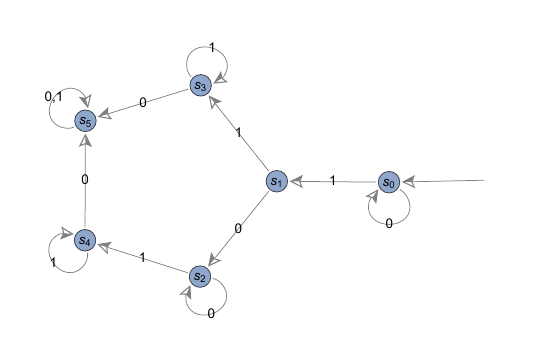}
\end{center}
This automaton dictates that $\theta$ is the length-$2$ substitution on the alphabet $\{s_0, s_1, s_2, s_3, s_4, s_5\}$ defined by
\[
	\begin{array}{lll}
		\theta(s_0) = s_0 s_1		& & \theta(s_3) = s_5 s_3 \\
		\theta(s_1) = s_2 s_3		& & \theta(s_4) = s_5 s_4 \\
		\theta(s_2) = s_2 s_4		& & \theta(s_5) = s_5 s_5.
	\end{array}
\]
The sequence
\[
	\theta^\infty(s_0) = s_0, s_1, s_2, s_3, s_2, s_4, s_5, s_3, s_2, s_4, s_5, s_4, s_5, s_5, s_5, s_3, \dots
\]
is a fixed point of $\theta$.
The letter-to-letter projection $\tau$ is the output function for $\mathcal M$, namely
\begin{align*}
	\tau(s_5) &= 0 \\
	\tau(s_0) = \tau(s_1) = \tau(s_3) &= 1 \\
	\tau(s_2) = \tau(s_4) &= 2.
\end{align*}
Therefore
\[
	\tau(\theta^\infty(s_0)) = 1, 1, 2, 1, 2, 2, 0, 1, 2, 2, 0, 2, 0, 0, 0, 1, \dots
\]
is the sequence produced by $\mathcal M$ in direct reading.
\end{example}
 
 While Theorem~\ref{Cobham}  characterizes $k$-automatic sequences for all $k \geq 2$, Christol's theorem characterizes $p$-automatic sequences for prime $p$.
By taking a sufficiently large finite field of characteristic $p$, we may assume (by choosing an arbitrary embedding) that the output alphabet is a subset of this field.

\begin{theorem}[Christol et al.~\cite{CKMR}]\label{Christol} Let 
 $a(n)_{n \geq 0}$ be a sequence of elements in $\F_{p^\alpha}$. Then
 $\sum_{n \geq 0}a (n)x^{n}$ is algebraic over $\F_{p^\alpha}(x)$ if and only if 
$a(n)_{n \geq 0}$ is $p$-automatic.
\end{theorem}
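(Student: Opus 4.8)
My plan is to prove the two implications separately, in both directions via the Cartier (decimation) operators and two ``finiteness'' reformulations. Set $q = p^{\alpha}$, so that $\F_{p^{\alpha}} = \F_{q}$ is a perfect field; write $\phi$ for the coefficientwise Frobenius $\sum_{n} c_{n} x^{n} \mapsto \sum_{n} c_{n}^{p} x^{n}$, an automorphism of $\F_{q}[[x]]$ with $\phi^{\alpha} = \mathrm{id}$; and for $0 \le r < p$ let $\Lambda_{r}$ be the operator $\sum_{n} c_{n} x^{n} \mapsto \sum_{n} c_{pn+r} x^{n}$, so that every power series obeys $g(x) = \sum_{r=0}^{p-1} x^{r} (\Lambda_{r} g)(x^{p})$ and, because $\F_{q}$ is perfect, $(\Lambda_{r} g)(x^{p}) = \bigl(\phi^{-1}(\Lambda_{r} g)\bigr)^{p}$. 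The reformulations I would use are: $a$ is $p$-automatic iff its $p$-kernel $\{(a(p^{k} n + r))_{n} : k \ge 0,\ 0 \le r < p^{k}\}$ is finite (Eilenberg), iff the generating functions of all iterated decimations $\Lambda_{r_{1}} \cdots \Lambda_{r_{\ell}} f$ span a finite-dimensional $\F_{q}$-subspace of $\F_{q}[[x]]$ (equivalence here uses that $\F_q$ is finite, so a finite-dimensional $\F_q$-space has finitely many elements); and $f$ is algebraic over $\F_{q}(x)$ iff it lies in a finite-dimensional $\F_{q}(x)$-subspace of $\F_{q}((x))$ stable under $g \mapsto g^{p}$ (if such a space has dimension $d$ then $f, f^{p}, \dots, f^{p^{d}}$ satisfy an $\F_{q}(x)$-linear relation $\sum_{i} A_{i}(x) f^{p^{i}} = 0$ with the $A_{i} \in \F_{q}[x]$ not all zero, a nonzero polynomial in $f$; the converse is immediate since $\F_{q}(x)(f)$ is then a finite field extension).

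For ``$p$-automatic $\Rightarrow$ algebraic'' I would argue as follows. If $a$ is $p$-automatic its $p$-kernel is finite; enlarging the corresponding finite set of generating functions so that it is closed under the $\Lambda_{r}$ and under $\phi^{\pm 1}$ — which keeps it finite, since $\phi^{\alpha} = \mathrm{id}$ and $\phi$ commutes with the $\Lambda_{r}$ — produces a finite list $g_{1} = f, \dots, g_{m}$ whose $\F_{q}(x)$-span $W$ is finite dimensional. The relation $g_{j} = \sum_{r} x^{r} \bigl(\phi^{-1}(\Lambda_{r} g_{j})\bigr)^{p}$, with each $\phi^{-1}(\Lambda_{r} g_{j})$ again on the list, shows that $W$ is contained in the $\F_{q}(x)$-span $W^{[p]}$ of $\{g_{i}^{p}\}$; since $\F_{q}(x) = \bigoplus_{i=0}^{p-1} x^{i} \F_{q}(x)^{p}$ forces $\dim W^{[p]} = \dim W$, in fact $W = W^{[p]}$, so $W$ is stable under $g \mapsto g^{p}$. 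As $f \in W$, the reformulation above gives that $f$ is algebraic.

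For ``algebraic $\Rightarrow$ $p$-automatic'' I would start from a nonzero polynomial relation for $f$ and construct a finite-dimensional $\F_{q}$-vector space $V \subseteq \F_{q}[[x]]$ with $f \in V$ and $\Lambda_{r}(V) \subseteq V$ for every $r$; one may take $V$ to consist of power series $b(x)/R(x)$ with $R$ ranging over the Frobenius twists $\phi^{i}(R_{0})$ ($0 \le i < \alpha$) of the leading coefficient $R_{0}$ of the relation and $\deg b$ bounded in terms of the degrees occurring in the relation. The stability $\Lambda_{r}(V) \subseteq V$ rests on the identity $\Lambda_{r}\bigl(u(x)^{p} v(x)\bigr) = \phi(u)(x)\,\Lambda_{r}(v)(x)$, used to pull the $p$-th-power denominator $R(x)^{p} = \phi(R)(x^{p})$ through a decimation — which is precisely why the twisted denominators $\phi^{i}(R_{0})$ must be built into $V$. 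Then every iterated decimation of $f$ lies in $V$, so the generating functions of the $p$-kernel of $a$ span a finite-dimensional space and $a$ is $p$-automatic. (Since our automata read base-$p$ digits most significant first whereas $\Lambda_{r}$ removes the least significant digit, I would also invoke the standard equivalence of most-significant-first and least-significant-first automaticity; equivalently, the $p$-kernel criterion does not see the reading direction.)

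The step I expect to be the main obstacle is the bookkeeping forced by the Frobenius twist: because $\F_{q}$ need not be the prime field, a single decimation does not preserve the relevant $\F_{q}(x)$-space or denominator but only its $\phi$-orbit, and one must check each time that this orbit stays finite — which it does because $\phi^{\alpha} = \mathrm{id}$. A smaller technical point, confined to the algebraic $\Rightarrow$ automatic direction, is handling inseparability and the case $R_{0}(0) = 0$: one first replaces $f$ by a suitable $f^{p^{k}}$, respectively clears a power of $x$, and only then builds $V$.
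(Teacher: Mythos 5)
The paper does not prove this statement: it quotes Christol's theorem from \cite{CKMR} and uses it as a black box, so there is no internal proof to compare against; your sketch has to be judged on its own. Your first direction (automatic $\Rightarrow$ algebraic) is the standard argument and is correct: finiteness of the $p$-kernel, closing the corresponding finite set of series under the Cartier operators $\Lambda_r$ and under $\phi^{\pm 1}$ (finite because $\phi^{\alpha}=\mathrm{id}$ and $\phi$ commutes with the $\Lambda_r$), and then the observation that the $\F_{p^\alpha}(x)$-span $W$ satisfies $W \subseteq W^{[p]}$ with $\dim W^{[p]} = \dim W$, hence is stable under $g \mapsto g^{p}$, which forces a nontrivial relation $\sum_i A_i(x) f^{p^i} = 0$.

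The converse direction has a genuine gap as written. You take $V$ to consist of power series $b(x)/R(x)$ with $b$ a polynomial of bounded degree and $R$ a Frobenius twist $\phi^{i}(R_0)$ of a coefficient of the algebraic relation; that is a finite-dimensional space of \emph{rational} functions, so it cannot contain $f$ unless $f$ itself is rational, and the key claim ``every iterated decimation of $f$ lies in $V$'' fails already at the zeroth step: the iterated images $\Lambda_{r_1}\cdots\Lambda_{r_\ell} f$ of an irrational algebraic series are in general irrational. What your identity $\Lambda_r\bigl(u(x)^p v(x)\bigr) = \phi(u)(x)\,\Lambda_r(v)(x)$ actually yields, after an Ore-type reduction to a relation $f = \sum_{i=1}^{k} D_i f^{p^i}$ with $D_i \in \F_{p^\alpha}[x]$ (this is where your remarks about inseparability, a vanishing $f^{p^0}$-coefficient, and clearing a power of $x$ belong), is stability under every $\Lambda_r$ of the $\F_{p^\alpha}$-span of the elements $x^{m}\,\phi^{j}(f)^{p^{i}}$ with $m$ bounded, $0 \le i \le k-1$, $0 \le j < \alpha$; the Frobenius twist forces you to carry the whole $\phi$-orbit of $f$ (and of the $D_i$), not of a denominator. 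Replacing your $V$ by this space --- which does contain $f$, is finite-dimensional over a finite field and hence a finite set --- makes the kernel finite and completes the standard proof; the rest of your architecture (the $p$-kernel criterion, indifference to reading direction, finiteness of the $\phi$-orbit) is sound.
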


\section{Profinite automata generated by automatic sequences}\label{generated by automatic sequences}

In this section we study infinite automata that are inverse limits of finite automata.

We use the following notation for the remainder of the article.
For $0\leq \alpha\leq \beta$, let $\pi_{\alpha, \beta}: \Z/(p^{\beta}\Z) \rightarrow \Z/ (p^{\alpha}\Z)$  denote the natural projection map modulo $p^\alpha$.
The ring of $p$-adic integers is the inverse limit $\Z_p = \varprojlim \Z/(p^\alpha \Z)$ in the category of rings.
Let $\pi_{\alpha, \infty}$ be the projection map $\pi_{\alpha, \infty}: \Z_p \rightarrow \Z / (p^{\alpha}\Z)$.

Two $k$-DAO's are {\em equivalent} if they determine the same function $\Sigma_k^* \to \mathcal A$ given by $w \mapsto \tau(\delta(\overline{s}, w))$.
A $k$-DFAO with the fewest states in its equivalence class is said to be \emph{minimal}.
Given a $k$-DFAO $\mathcal M$, there is a unique minimal automaton equivalent to $\mathcal M$, up to renaming states~\cite[Theorem~3.10.1]{Shallit-SecondCourse}.
The minimal automaton can be computed by removing unreachable states and successively identifying pairs of equivalent states, that is, states such that starting in either state and reading a word produces the same output.
Note that this algorithm is typically described in the literature for $k$-DFA's --- automata with output alphabet $\{\textsf{True}, \textsf{False}\}$ --- but is generalized in a straightforward way to $k$-DFAO's.

A \emph{morphism} from a $k$-DAO $\mathcal M = (\mathcal S, \Sigma_k, \delta, \overline{s}, \mathcal A, \tau)$ to a $k$-DAO $\mathcal M' = (\mathcal S', \Sigma_k, \delta', \overline{s}', \mathcal A', \tau')$ consists of maps $\psi : \mathcal S \to \mathcal S'$ and $\pi: \mathcal A \to \mathcal A'$ such that $\psi(\overline{s}) = \overline{s}'$, $\psi(\delta(s, i)) = \delta'(\psi(s), i)$, and $\pi(\tau(s)) = \tau'(\psi(s))$ for all $s \in \mathcal S$ and $i \in \Sigma_k$.
In this article, the map $\pi$ will always be one of the projection maps $\pi_{\alpha, \beta}$.

Suppose that $(\mathcal M_\alpha)_{\alpha \geq 0}= (\mathcal S_\alpha, \Sigma_p, \delta_\alpha, \overline{s}_\alpha, \Z/(p^\alpha \Z), \tau_\alpha)_{\alpha\geq 0}$ is a sequence of finite automata such that, for each $0 \leq \alpha \leq \beta$, there is a morphism from $\mathcal M_{\beta}$ to $\mathcal M_\alpha$, consisting of a map $\psi_{\alpha, \beta} : \mathcal S_\beta \to \mathcal S_\alpha$ and the projection map $\pi_{\alpha, \beta}$, with the property that
\begin{equation}\label{commute}
\psi_{\alpha,\gamma} = \psi_{\alpha,\beta} \circ \psi_{\beta,\gamma}
\end{equation}
if $0 \leq \alpha \leq \beta \leq \gamma$. Then we call the sequence $(\mathcal M_\alpha)_{\alpha \geq 0}$ an {\em inverse family of finite automata}.
We remark that our automata have  output in $\Z/(p^\alpha \Z)$, because of our motivating examples, but a more general definition of an inverse family of finite automata is possible.

\begin{lemma}\label{inverse limit of automata}
Let $a \in \Z_p^\N$ be a sequence of $p$-adic integers such that $a \bmod p^\alpha$ is $p$-automatic for each $\alpha \geq 0$.
Let $\mathcal M_\alpha$ be the minimal automaton generating  $a \bmod p^\alpha$ in direct reading.
Then $(\mathcal M_\alpha)_{\alpha\geq 0}$ is an inverse family of finite automata.
\end{lemma}

\begin{proof}
Let $\mathcal M_\alpha = (\mathcal S_\alpha, \Sigma_p, \delta_\alpha, \overline{s}_\alpha, \Z/(p^\alpha \Z), \tau_\alpha)$, and let
$\mathcal M_\alpha'   = (\mathcal S_{\alpha +1}, \Sigma_p, \delta_{\alpha+1}, \overline{s}_{\alpha+1}, \Z/(p^\alpha \Z)   , \pi_{\alpha, \alpha+1} \circ \tau_{\alpha+1})$ be the automaton obtained by replacing the output function $\tau_{\alpha+1}$ in $\mathcal M_{\alpha+1}$ with the map $s \mapsto (\tau_{\alpha+1}(s) \bmod p^\alpha)$.
Then $\mathcal M_\alpha'$ and $\mathcal M_\alpha$ are equivalent.
Since $\mathcal M_\alpha$ is minimal, minimizing $\mathcal M_\alpha'$ gives $\mathcal M_\alpha$.
Since minimizing can be accomplished by successively identifying pairs of equivalent states,
each element of $\mathcal S_{\alpha + 1}$ can be assigned to a unique element of $\mathcal S_\alpha$; let $\psi_{\alpha, \alpha+1}$ be this map.
We claim that $\psi_{\alpha, \alpha + 1}$ and $\pi_{\alpha, \alpha + 1}$ comprise a morphism from $\mathcal M_{\alpha + 1}$ to $\mathcal M_\alpha$.
Minimizing an automaton maps the initial state to the initial state, so $\psi_{\alpha, \alpha+1}(\overline{s}_{\alpha +1})=\overline{s}_\alpha$.
Minimizing preserves edge relations, so $\psi_{\alpha, \alpha+1}(\delta_{\alpha+1}(s, i)) = \delta_\alpha(\psi_{\alpha, \alpha+1}(s), i)$ for every $s \in \mathcal S_{\alpha+1}$ and $i \in \Sigma_k$.
Finally, because of our choice of output function for $\mathcal M_\alpha'$, we have $\pi_{\alpha, \alpha+1}(\tau_{\alpha+1}(s)) = \tau_\alpha(\psi_{\alpha, \alpha+1}(s))$ by the equivalence of $\mathcal M_\alpha$ and $\mathcal M_\alpha'$.
Thus $\psi_{\alpha, \alpha+1}$ and $\pi_{\alpha, \alpha+1}$ comprise a morphism from $\mathcal M_{\alpha+1}$ to $\mathcal M_\alpha$.
Composing the maps $\psi_{\alpha, \alpha+1}$ gives maps $\psi_{\alpha, \beta}$ with the desired composition rule.
\end{proof}

\begin{example}\label{Catalan modulo 2}
We illustrate Lemma~\ref{inverse limit of automata} by computing the projection map $\psi_{1,2}$ for the sequence $C(n)_{n \geq 0}$ of Catalan numbers $C(n) = \frac{1}{n+1} \binom{2n}{n}$.
Catalan numbers are ubiquitous in combinatorial settings.
The generating function $y = \sum_{n \geq 0} C(n) x^n$ is algebraic and satisfies $x y^2 - y + 1 = 0$.
By Theorem~\ref{Denef--Lipshitz_1}, the sequence $(C(n) \bmod p^\alpha)_{n \geq 0}$ is $p$-automatic for each prime $p$ and each $\alpha \geq 0$.
Let $p = 2$.
For $\alpha = 2$, the automaton $\mathcal M_2 = (\mathcal S_2, \Sigma_2, \delta_2, \overline{s}_2, \Z/ (4 \Z), \tau_2)$ appears in Example~\ref{Catalan modulo 4}.
It produces the sequence
\[
	C_2 = (C(n) \bmod 4)_{n \geq 0} = 1, 1, 2, 1, 2, 2, 0, 1, 2, 2, 0, 2, 0, 0, 0, 1, \dots.
\]
The automaton in Example~\ref{substitution construction} is $(\mathcal S_2, \Sigma_2, \delta_2, \overline{s}_2, \mathcal S_2, \text{id})$, where the output function is the identity map and $s_0 = \overline{s}_2$.
The sequence $u_2$ generated by this automaton is
\[
	u_2 = s_0, s_1, s_2, s_3, s_2, s_4, s_5, s_3, s_2, s_4, s_5, s_4, s_5, s_5, s_5, s_3, \dots.
\]
For $\alpha = 1$, the sequence
\[
	C_1 = (C(n) \bmod 2)_{n \geq 0} = 1, 1, 0, 1, 0, 0, 0, 1, 0, 0, 0, 0, 0, 0, 0, 1, \dots
\]
is generated by $\mathcal M_1 = (\mathcal S_1, \Sigma_2, \delta_1, \overline{s}_1, \Z/ (2 \Z), \tau_1)$, which is the following.
\begin{center}
	\includegraphics{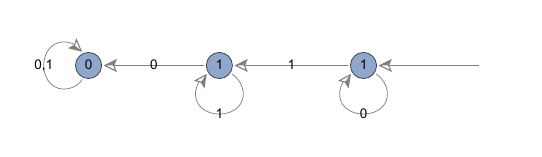}
\end{center}
The automaton $(\mathcal S_1, \Sigma_2, \delta_1, \overline{s}_1, \mathcal S_1, \text{id})$ is as follows, where we name the states $t_0, t_1, t_2$ and $t_0 = \overline{s}_1$.
\begin{center}
	\includegraphics{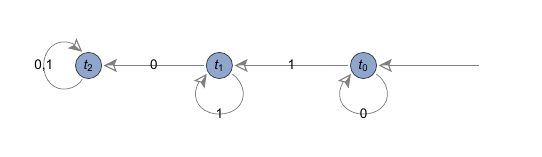}
\end{center}
The states $s_2, s_4, s_5$ in $\mathcal M_2$ correspond to output congruent to $0$ or $2$ modulo $4$; since there is only one state in $\mathcal M_1$ whose output is $0$ modulo $2$, it follows that
\[
	\psi_{1,2}(s_2) = \psi_{1,2}(s_4) = \psi_{1,2}(s_5) = t_2.
\]
One checks that $\psi_{1,2}(s_1) = \psi_{1,2}(s_3) = t_1$, and on the initial state we have $\psi_{1,2}(s_0) = t_0$.
\end{example}

We now give the definition of a profinite automaton. Readers familiar with inverse limits will recognize profinite automata as inverse limits in the category of $p$-DAO's with automaton morphisms.

\begin{definition}\label{profinite_automaton}
Let  $(\mathcal M_\alpha)_{\alpha\geq 0} = (\mathcal S_\alpha, \Sigma_p, \delta_\alpha, \overline{s}_\alpha, \Z/(p^\alpha \Z), \tau_\alpha)_{\alpha \geq 0}$ be an inverse family of finite automata with maps $\psi_{\alpha,\beta}:\mathcal S_\beta \rightarrow \mathcal S_\alpha$  and  projection maps $\pi_{\alpha,\beta}$.
We call $\mathcal M=(\mathcal S, \Sigma_p, \delta, \overline{s}, \Z_p,\tau)$   a  \emph{profinite automaton} if
\begin{enumerate}
\item\label{profinite_automaton:states}
$\mathcal S = \{ (s_\alpha)_{\alpha\geq 0}\in \prod_{\alpha \geq 0} \mathcal S_\alpha:    \psi_{\alpha,\beta}( s_{\beta}  )=s_{\alpha}  \mbox{ for each } 0 \leq \alpha \leq \beta   \}$,
\item\label{profinite_automaton:initial}
 $(\overline{s})_\alpha= \overline{s}_\alpha$ for each $\alpha \geq 0$,
\item\label{profinite_automaton:delta}
$(\delta(s,i))_\alpha = \delta_{\alpha}(s_\alpha,i)$ for each $\alpha \geq 0$, and
\item\label{profinite_automaton:tau}
$\pi_{\alpha, \infty}(\tau(s)) = \tau_\alpha(s_\alpha)$ for each $\alpha \geq 0$.
\end{enumerate}
\end{definition}

Note that the ranges of the maps $\delta$ and $\tau$  are in $\mathcal S$ and $\Z_p$ respectively because the maps $\psi_{\alpha, \beta}$ satisfy   Equation~\eqref{commute}. 
We define $\psi_{\alpha, \infty} : \mathcal S \to \mathcal S_\alpha$ by $\psi_{\alpha, \infty}(s) \colonequal s_\alpha$.
Profinite automata satisfy the following commutative diagram.
\[
\xymatrix{
	\mathcal S \ar[rd]|-{\psi_{\beta,\infty}} \ar[rrd]|-{\psi_{\alpha,\infty}} \ar[ddd]_\tau \\
																	& \mathcal S_\beta \ar[r]_{\psi_{\alpha,\beta}} \ar[d]_{\tau_\beta}	& \mathcal S_\alpha \ar[d]_{\tau_\alpha} \\
																	& \Z/(p^\beta \Z) \ar[r]^{\pi_{\alpha,\beta}}						& \Z/(p^\alpha \Z) \\
	\Z_p \ar[ru]|-{\pi_{\beta,\infty}} \ar[rru]|-{\pi_{\alpha,\infty}}
}
\]

One can alternatively define profinite automata using reverse reading rather than direct reading.

By Theorem~\ref{Denef--Lipshitz_1} and Lemma~\ref{inverse limit of automata}, algebraic sequences of $p$-adic integers are generated by profinite $p$-DAO's. But also, Theorem~\ref{Denef--Lipshitz_1}   has a converse in \cite[Theorem 3.1]{Denef--Lipshitz}: a sequence generated by a profinite $p$-DAO is, for each $\alpha$, congruent modulo $p^{\alpha}$ to an algebraic sequence of $p$-adic integers.

\begin{example}
The following shows the first few states in a profinite $2$-DAO generating the sequence of Catalan numbers.
This automaton is equivalent modulo $4$ to the automaton $\mathcal M_2$ in Example~\ref{Catalan modulo 4} and equivalent modulo $2$ to the automaton $\mathcal M_1$ in Example~\ref{Catalan modulo 2}.
\begin{center}
	\includegraphics[scale=.7]{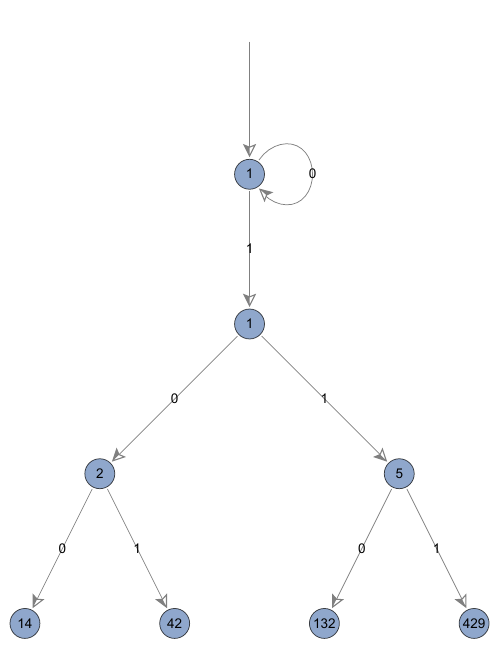}
\end{center}
\end{example}

A profinite automaton can have uncountably many states, since in general each element of $\tau(\mathcal S)$ corresponds to an infinite path in a rooted tree that encodes the residues attained by a sequence modulo each $p^\alpha$.
Again let $C(n) = \frac{1}{n+1} \binom{2n}{n}$ be the $n$th Catalan number, and let $p = 2$.
Consider the rooted tree in which the vertices on level $\alpha$ consist of all residues $j$ modulo $2^\alpha$ such that $C(n) \equiv j \mod 2^\alpha$ for some $n \geq 0$.
Two vertices on adjacent levels are connected by an edge if one residue projects to the other.
Levels $0$ through $5$ of this tree are shown below.
In particular, $C(n) \nequiv 3 \mod 4$ for all $n \geq 0$; therefore there are only $3$ vertices on level $\alpha = 2$.
\begin{center}
	\includegraphics{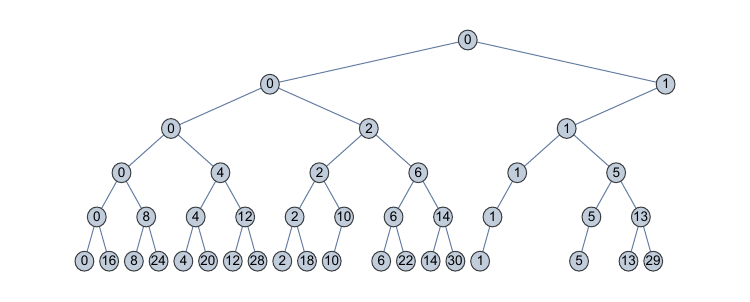}
\end{center}
The residues attained by the sequence of Catalan numbers modulo $2^\alpha$ can be computed automatically~\cite[Section~3.1]{Rowland--Yassawi}.
The number of states in the minimal automaton $\mathcal M_\alpha$ is bounded below by the number of vertices on level $\alpha$.

We will henceforth use the notation $\varprojlim$ to denote the inverse limit of an inverse family. Thus  for example, we shall write $\mathcal M=\varprojlim \mathcal M_\alpha$ to denote the inverse limit of an inverse family of minimal automata, and $s =\varprojlim s_\alpha$ to denote an element in the state set $\mathcal S$.

We say that a $p$-DAO is \emph{minimal} if it contains no unreachable states and no pair of equivalent states.
For finite automata, this definition of minimality coincides with the earlier definition.
Up to renaming of states, profinite minimal automata have the characterization given by Theorem~\ref{profinite characterization} below.
For a $p$-DAO $\mathcal M = (\mathcal S, \Sigma_p, \delta, \overline{s}, \Z_p, \tau)$, let us use $\mathcal M \bmod p^\alpha$ as an abbreviation for $(\mathcal S, \Sigma_p, \delta, \overline{s}, \Z/(p^\alpha \Z), \pi_{\alpha, \infty} \circ \tau)$.
Let $\mathcal M_\alpha = (\mathcal S_\alpha, \Sigma_p, \delta_\alpha, \overline{s}_\alpha, \Z/(p^\alpha \Z), \tau_\alpha)$ be the minimal  automaton equivalent to $\mathcal M \bmod p^\alpha$.
For each $0 \leq \alpha \leq \beta$, the automaton $\mathcal M_\beta$ projects modulo $p^\alpha$ to $\mathcal M_\alpha$.
We say that $\mathcal S$ is \emph{closed under inverse limits} if every inverse family $(s_\alpha)_{\alpha \geq 0}$, where $s_\alpha \in \mathcal S_\alpha$, has an inverse limit $s \in \mathcal S$.
That is, for every sequence of states $(s_\alpha)_{\alpha \geq 0}$ where $s_\alpha \in \mathcal S_\alpha$ and $\pi_{\alpha, \beta}(\tau_\beta(\delta_\beta(s_\beta, w))) = \tau_\alpha(\delta_\alpha(s_\alpha, w))$ for all $0 \leq \alpha \leq \beta$ and $w \in \Sigma_p^*$, there exists a state $s \in \mathcal S$ such that $\pi_{\alpha, \infty}(\tau(\delta(s, w))) = \tau_\alpha(\delta_\alpha(s_\alpha, w))$ for all $\alpha \geq 0$ and for all $w \in \Sigma_p^*$.

\begin{example}
Let $p = 2$, and for each $\alpha \geq 0$ let $\iota : \Z/(2^\alpha \Z) \to \Z_2$ be the lifting defined by $\iota(n + 2^\alpha \Z) = n + 0 \cdot 2^\alpha + 0 \cdot 2^{\alpha + 1} + \cdots$ for each $0 \leq n \leq 2^\alpha - 1$.
Again let $C(n)$ denote the $n$th Catalan number.
For $n_\ell \cdots n_1 n_0 \in \Sigma_2^*$, let $\val_2(n_\ell \cdots n_1 n_0) = n_\ell 2^\ell + \cdots + n_1 2 + n_0$.
Consider the minimal $2$-DAO $\mathcal M = (\mathcal S, \Sigma_2, \delta, \overline{s}, \Z_2, \tau)$ defined for $w \in \Sigma_2^*$ by
\[
	\tau(\delta(\overline{s}, w)) =
	\begin{cases}
		\iota(C(\val_2(v)) \bmod 2^\alpha)	& \text{if $w = 1^\alpha 0 v$ for some $\alpha \geq 1$ and $v \in \Sigma_2^*$} \\
		0							& \text{otherwise.}
	\end{cases}
\]
Since $1^\alpha 0$ is a prefix of $w$ for at most one value of $\alpha$, the automaton $\mathcal M$ is not over-determined.
The set $\mathcal S$ is not closed under inverse limits, since, for each $\alpha \geq 0$, the sequence $(C(n) \bmod 2^\alpha)_{n \geq 0}$ occurs in $\mathcal M_\alpha$ (i.e.\ is obtained by reading words $v$ starting from the state $\delta_\alpha(\overline{s}_\alpha, 1^\alpha 0)$), but $C(n)_{n \geq 0}$ does not occur in $\mathcal M$.
This example shows that the condition that $\mathcal S$ is closed under inverse limits is necessary in the following theorem.
\end{example}

\begin{theorem}\label{profinite characterization}
Let $\mathcal M = (\mathcal S, \Sigma_p, \delta, \overline{s}, \Z_p, \tau)$ be a minimal $p$-DAO.
Then $\mathcal M$ is profinite if and only if
\begin{enumerate}
\item
the automaton $(\mathcal S, \Sigma_p, \delta, \overline{s}, \Z/(p^\alpha \Z), \pi_{\alpha, \infty} \circ \tau)$ is equivalent to a finite automaton for each $\alpha \geq 0$, and
\item
$\mathcal S$ is closed under inverse limits.
\end{enumerate}
\end{theorem}

\begin{proof}
The forward direction is clear:
Since $\mathcal M$ is profinite, it has an inverse family $(\mathcal M_\alpha)_{\alpha \geq 0}$ of finite automata, and $\mathcal M \bmod p^\alpha$ is equivalent to $\mathcal M_\alpha$.
Moreover, $\mathcal S$ is closed under inverse limits by Part~\ref{profinite_automaton:states} of Definition~\ref{profinite_automaton}.

For the other direction, let $\mathcal M_\alpha = (\mathcal S_\alpha, \Sigma_p, \delta_\alpha, \overline{s}_\alpha, \Z/(p^\alpha \Z), \tau_\alpha)$ be the minimal finite automaton equivalent to $\mathcal M \bmod p^\alpha$.
For each $0 \leq \alpha \leq \beta$, the automaton $\mathcal M_\beta$ projects modulo $p^\alpha$ to $\mathcal M_\alpha$, so there is a unique map $\psi_{\alpha, \beta} : \mathcal S_\beta \to \mathcal S_\alpha$ such that $\psi_{\alpha, \beta}$ and $\pi_{\alpha, \beta}$ comprise a morphism from $\mathcal M_\beta$ to $\mathcal M_\alpha$.
We have $\psi_{\alpha,\gamma} = \psi_{\alpha,\beta} \circ \psi_{\beta,\gamma}$ if $0 \leq \alpha \leq \beta \leq \gamma$, so $(\mathcal M_\alpha)_{\alpha \geq 0}$ is an inverse family.
Let $\mathcal M' = (\mathcal S', \Sigma_p, \delta', \overline{s}', \Z_p, \tau')$ be the profinite automaton corresponding to the inverse family $(\mathcal M_\alpha)_{\alpha \geq 0}$.
We establish a map $\psi : \mathcal S \to \mathcal S'$ such that $\psi$ and the identity map $\text{id} : \Z_p \to \Z_p$ comprise a morphism from $\mathcal M$ to $\mathcal M'$.
Define the image $\psi(s)$ of a state $s \in \mathcal S$ as follows.
For each $\alpha \geq 0$, consider the function $\Sigma_p^* \to \mathcal \Z/(p^\alpha \Z)$ defined on words by $w \mapsto \pi_{\alpha, \infty}(\tau(\delta(s, w)))$, i.e.\ obtained by modifying $\mathcal M \bmod p^\alpha$ to have initial state $s$.
Since $\mathcal M \bmod p^\alpha$ is equivalent to $\mathcal M_\alpha$, this function is also given by $w \mapsto \tau_\alpha(\delta_\alpha(s_\alpha, w))$ for some state $s_\alpha \in \mathcal S_\alpha$; namely, since $\mathcal M$ has no unreachable states, there exists a word $v \in \Sigma_p^*$ such that $\delta(\overline{s}, v) = s$, and we can take $s_\alpha = \delta_\alpha(\overline{s}_\alpha, v)$.
Moreover, $s_\alpha$ is unique, since two distinct states in $\mathcal M_\alpha$ yielding the same function would be equivalent and would contradict the minimality of $\mathcal M_\alpha$.
The sequence $(s_\alpha)_{\alpha \geq 0}$ of states forms an inverse family, and we define $\psi(s) = \varprojlim \delta_\alpha(\overline{s}_\alpha, v) \in \mathcal S'$.
The initial state $\overline{s}$ projects to the initial state $\overline{s}_\alpha$ for each $\alpha \geq 0$; by Part~\ref{profinite_automaton:initial} of Definition~\ref{profinite_automaton}, $\overline{s}'$ also projects to $\overline{s}_\alpha$ for each $\alpha \geq 0$, so $\psi(\overline{s}) = \varprojlim \overline{s}_\alpha = \overline{s}'$.
Moreover, for each $s \in \mathcal S$ and $i \in \Sigma_p$ we have
\[
	\psi(\delta(s, i))
	= \varprojlim \delta_\alpha(\overline{s}_\alpha, vi)
	= \varprojlim \delta_\alpha(\delta_\alpha(\overline{s}_\alpha, v), i)
	= \delta'(\psi(s), i)
\]
for the appropriate word $v$,  where $vi$ is the word $v$ with $i$ appended, and where the last equality follows from Part~\ref{profinite_automaton:delta} of Definition~\ref{profinite_automaton}.
Finally, for each $s \in \mathcal S$ we have
\[
	\tau(s)
	= \varprojlim \tau_\alpha(\delta_\alpha(\overline{s}_\alpha, v))
	= \tau'(\psi(s)),
\]
for the appropriate word $v$, where the last equality follows from Part~\ref{profinite_automaton:tau} of Definition~\ref{profinite_automaton}.
Therefore $\psi$ and $\text{id}$ comprise a morphism from $\mathcal M$ to $\mathcal M'$.

It remains to show that $\psi$ is bijective.
Suppose $s, t \in \mathcal S$ such that $\psi(s) = \psi(t)$.
Let $v, z$ be words such that $\delta(\overline{s}, v) = s$ and $\delta(\overline{s}, z) = t$.
Since $\psi(s) = \psi(t)$, we have $\delta_\alpha(\overline{s}_\alpha, v) = \delta_\alpha(\overline{s}_\alpha, z)$ for each $\alpha \geq 0$.
For all $w \in \Sigma_p^*$, we have
\[
	\tau(\delta(s, w))
	= \varprojlim \tau_\alpha(\delta_\alpha(\overline{s}_\alpha, vw))
	= \varprojlim \tau_\alpha(\delta_\alpha(\overline{s}_\alpha, zw))
	= \tau(\delta(t, w)).
\]
Therefore $s$ and $t$ are equivalent states in $\mathcal M$.
Since $\mathcal M$ is minimal, it follows that $s = t$, and hence $\psi$ is injective.
To show surjectivity, let $s' \in \mathcal S'$.
The state $s'$ projects to a state $s_\alpha \in \mathcal S_\alpha$ for each $\alpha \geq 0$.
Since $\mathcal S$ is closed under inverse limits, there exists $s = \varprojlim s_\alpha \in \mathcal S$ such that $\psi(s) = s'$.
Therefore $\mathcal M$ and $\mathcal M'$ are isomorphic, so $\mathcal M$ is profinite.
\end{proof}

\section{Substitution shifts defined by profinite automata}\label{dynamical properties}

In this section we construct a substitution shift,  usually on an uncountable alphabet, from a profinite automaton.
The construction is completely analogous to the construction of  substitution shifts  on a finite alphabet. We generalize known dynamical properties of constant-length substitution shifts on a finite alphabet to these substitution shifts on an uncountable alphabet.

\subsection{Profinite shifts and substitutions}

Let $\mathcal A$ be a compact topological space.
We endow $\mathcal A^\N$ 
with the product topology, so that it is a compact space, and let 
$\sigma:\mathcal A^\N \rightarrow \mathcal A^\N$ 
denote the {\em shift map} $\sigma (  a(n)_{n \geq 0}   )\colonequal a(n+1)_{n \geq 0}$; $\sigma$ is a continuous mapping on $\mathcal A^\N$.
If $ a = a(n)_{n \geq 0}  \in {\mathcal A}^\N$,  define  $X_{  a} \colonequal \overline{\{\sigma^{n}(  a): n\in \N\}}$, the closure of the $\sigma$-orbit of $a$ in $\mathcal A^\N$;
we  call 
$(X_a, \sigma)$ the {\em one-sided  shift generated by $a$}.
We will be primarily interested in {\em substitution shifts}, namely, those where $a$ is a fixed point of a substitution defined on an alphabet $\mathcal A$, and their codings.

 Traditionally, $\mathcal A$ is a finite alphabet, with the discrete topology.
We will also consider shifts defined on non-finite alphabets. Some of these alphabets will be subsets of $\Z_p$, with the topology defined by the $p$-adic absolute value, which coincides with the inverse limit topology if we consider $\Z_p$ as the inverse limit of the topological rings $\Z/(p^\alpha \Z)$.
If $\mathcal A$ is a closed subset of $\Z_p$, then $\mathcal A$  inherits the subspace topology from $\Z_p$. 
We extend the projection maps $\pi_{\alpha, \beta}$ and $\pi_{\alpha, \infty}$ termwise to sequences in $(\Z / (p^\beta \Z))^\N$ and $\Z_p^\N$.
If $a \in \Z_p^\N$ we let  $  a_\alpha \colonequal \pi_{\alpha, \infty} (a)$ be the sequence obtained by reducing each term modulo $p^\alpha$.
We also extend output functions $\tau$ and morphism maps $\psi$ to sequences termwise.

 For the rest of the article we continue to use the  notation of  Definition~\ref{profinite_automaton}.  
 Namely, we suppose that we have a sequence  $a \in \Z_p^\N$  of $p$-adic integers   such that $a_\alpha $ is $p$-automatic for each $\alpha\geq 0$. We let $\mathcal M_\alpha = (\mathcal S_\alpha, \Sigma_{p}, \delta_\alpha, \overline{s}_{ \alpha}, \Z/(p^\alpha \Z), \tau_\alpha)$ be the minimal automaton generating $a_\alpha$.  Then Lemma~\ref{inverse limit of automata} tells us that $(\mathcal M_\alpha)_{\alpha \geq 0}$ is an inverse family of finite automata. 
 For $0\leq \alpha\leq \beta$, we let the morphism from $\mathcal M_\beta$ to $\mathcal M_\alpha$ be given by the map $\psi_{\alpha, \beta}:\mathcal S_\beta \rightarrow \mathcal S_\alpha$ and the projection $\pi_{\alpha,\beta}$. 
Finally, we let $u_\alpha \in \mathcal S_\alpha^\N$ be the sequence generated by   $(\mathcal S_\alpha, \Sigma_{p},\delta_\alpha, \overline{s}_{ \alpha}, \mathcal S_\alpha, \text{id})$, 
so that $a_\alpha = \tau_\alpha(u_\alpha)$.

The following theorem extends the results of  Lemma~\ref{inverse limit of automata} to shifts.

\begin{theorem}\label{commuting_diagram_lemma_2}
Suppose that $a \in \Z_p^\N$ is a sequence of $p$-adic integers   such that $a_\alpha $ is $p$-automatic for each $\alpha\geq 0$.
Then there is a sequence $u$, generated by a profinite automaton, and a letter-to-letter projection $\tau:X_u\rightarrow X_a$ with
$\tau(u)=a$,  such that  for each $0 \leq \alpha \leq \beta$ the following diagram commutes.

\[
\xymatrix{
	(X_u, \sigma) \ar[rd]|-{\psi_{\beta,\infty}} \ar[rrd]|-{\psi_{\alpha,\infty}} \ar[ddd]_\tau \\
																	& (X_{u_\beta}, \sigma) \ar[r]_{\psi_{\alpha,\beta}} \ar[d]_{\tau_\beta}		& (X_{u_\alpha}, \sigma) \ar[d]_{\tau_\alpha} \\
																	& (X_{a_\beta}, \sigma) \ar[r]^{\pi_{\alpha,\beta}}					& (X_{a_\alpha}, \sigma) \\
	(X_a, \sigma) \ar[ru]|-{\pi_{\beta,\infty}} \ar[rru]|-{\pi_{\alpha,\infty}}
}\]

\end{theorem}

\begin{proof}
Let $0\leq \alpha\leq \beta$. As the maps $\psi_{\alpha,\beta}$, $\pi_{\alpha, \beta}$, and $\tau_\beta$ are extended termwise to sequences, they commute with the shift.
Since
\[\psi_{\alpha,\beta} (u_{\beta}) = \psi_{\alpha,\beta}(\delta_{\beta}(\overline{s}_\beta, \rep_p(k))_{k\geq 0})= \delta_{\alpha}(\overline{s}_\alpha, \rep_p(k))_{k\geq0}= u_\alpha,\]
it follows that $\psi_{\alpha,\beta}$ maps $X_{u_\beta}$ to $ X_{u_\alpha}$.
That the map $\pi_{\alpha,\beta}$ sends $X_{a_\beta}$ to $X_{a_\alpha}$  is proved analogously. Finally, as $\pi_{\alpha, \beta}\circ \tau_{\beta} = \tau_\alpha\circ \psi_{\alpha,\beta}$ is true on the orbit of $u_{\beta}$, it extends to an equality on $X_{u_\beta}$. Thus the inner displayed diagram commutes.

Define 
\[X\colonequal \varprojlim X_{u_\alpha} = \left\{x = \varprojlim x_\alpha:  x_\alpha \in     X_{u_\alpha} \mbox{ and } \psi_{\alpha,\beta}(x_{\beta}) = x_\alpha \mbox{ for each  } 0 \leq \alpha \leq \beta 
  \right\}.\]
 The space  $X$ lives in $\prod_{\alpha \geq 0} X_{u_\alpha}$ and inherits the subspace topology from the product topology on $\prod_{\alpha \geq 0} X_{u_\alpha}$. 
Since   for each $\alpha\geq 0$, $X_{u_\alpha}$ is Hausdorff and $\sigma$-invariant, then  is $X$ closed and $\sigma$-invariant.  Define $u \colonequal \varprojlim u_\alpha$;
 then $u\in X$,
  and so $X_u\subset X$.   

To show that $X\subset X_u$,  let $x=\varprojlim x_\alpha \in X$. We will show that for any open set $U$ containing $x$, there is some $n\in \N$ with $\sigma^n(u)\in U$.
Since each $X_{u_\alpha}$ is a shift defined on a finite alphabet, it is metrizable; denote this metric by $d$, where points are close if and only if they agree on a sufficiently large initial block.
     Any open set $U$ containing $x$ contains a set of the form $B=\left(B_0(x_0, \epsilon) \times \dots  \times B_\beta(x_\beta, \epsilon)  \times \prod_{\gamma >\beta}X_{u_\gamma}\right) \cap X$ where $B(x,\epsilon) $ is a ball of radius $\epsilon$ centered at $x$. 
     Since $x_\beta \in X_{u_\beta}$, there is some $n$ such that $\sigma^n(u_\beta)\in B_\beta (x_\beta, \epsilon)\cap X_{u_\beta}$. If $\alpha\leq \beta$, then $d(\sigma^n(u_\alpha), x_\alpha)= d(\sigma^n \psi_{\alpha,\beta}(u_\beta), \psi_{\alpha,\beta}(x_\beta))= d(\psi_{\alpha,\beta}\sigma^n (u_\beta), \psi_{\alpha,\beta}(x_\beta))< \epsilon$.       
  In other words, $\sigma^n(u) \in B$, and it follows that $X\subset X_u$.

For $x \in X_u$, define $\tau(x) \colonequal  \varprojlim \tau_\alpha(x_\alpha)$. Let $a \colonequal \tau(u)$; then similar to above, the set 
 $Y: = \varprojlim \tau_\alpha (X_{u_\alpha} )$ 
 is closed and $\sigma$-invariant, and $Y= X_a$.  Now the verification that the diagram commutes follows from the definitions of $X=X_u$ and $Y=X_a$.
\end{proof}

We remark that if $u_\alpha$ is the sequence generating by $\mathcal M_\alpha$, and $\mathcal M$ is the profinite automaton defined by $(\mathcal M_\alpha)_{\alpha \geq 0}$ with state set $\mathcal S$, then the sequence $u=\varprojlim u_\alpha$ can be identified with an element of $\mathcal S^\mathbb N$, where the $n$th component is the inverse limit of the $n$th component of $u_\alpha$.
Since the interpretation will be clear from context, we pass between these two objects without further remarks, just as we identify elements of $\Z_p^\N$ with elements of $\prod_{\alpha \geq 0} (\Z/(p^\alpha \Z))^\N$.

Recall that  a finite automaton defines a substitution as described after Theorem~\ref{Cobham}.

\begin{theorem}\label{substitution_theorem}
Let $(\mathcal M_\alpha)_{\alpha\geq 0} = (\mathcal S_\alpha, \Sigma_{p}, \delta_\alpha, \overline{s}_{\alpha}, \Z/(p^\alpha \Z), \tau_\alpha)_{\alpha \geq 0}$ be an inverse family of finite automata whose inverse limit is the  profinite automaton $\mathcal M=(\mathcal S, \Sigma_p, \delta, \overline{s}, \Z_p,\tau)$.
For each $\alpha \geq 0$, let $\theta_\alpha$ be the  substitution on $\mathcal S_\alpha$ corresponding to $\mathcal M_\alpha$, with fixed point $u_\alpha$.
Let  $u=\varprojlim u_\alpha \in \mathcal S^{\mathbb N}$. 
Then there is a  length-$p$ substitution  $\theta:\mathcal S\rightarrow \mathcal S^p$, with $\theta(u)=u$, where for each $0 \leq \alpha \leq \beta$ the following diagram commutes.
\[
\xymatrix{
	X_u \ar[rd]|-{\psi_{\beta,\infty}} \ar[rrd]|-{\psi_{\alpha,\infty}} \ar[ddd]_\theta \\
																& X_{u_\beta} \ar[r]_{\psi_{\alpha,\beta}} \ar[d]_{\theta_\beta}	& X_{u_\alpha} \ar[d]_{\theta_\alpha} \\
																& X_{u_\beta} \ar[r]^{\psi_{\alpha,\beta}}					& X_{u_\alpha} \\
	X_u \ar[ru]|-{\psi_{\beta,\infty}} \ar[rru]|-{\psi_{\alpha,\infty}}
}
\]
\end{theorem}
 
\begin{proof}

 For each  $s=\varprojlim s_\alpha\in  \mathcal S$, we will define $\theta (s)$, a word of length $p$. If $0\leq i\leq p-1$, we  use $\theta(s)_i$ to refer to the $i$-th letter of $\theta(s)$. Define 
 \[ \theta (s)_i\colonequal \varprojlim \theta_\alpha (s_\alpha)_i .\]
Then for $k\geq 0$ and $0\leq i \leq p-1$,
\[  u(pk+i)= \varprojlim u_\alpha(pk+i) = \varprojlim  \theta_\alpha  u_\alpha(k)_i = \theta(u(k))_i,    \]
which implies that $\theta(u)=u$.
As in Definition~\ref{substitution}, the substitution $\theta :\mathcal S \rightarrow \mathcal S^p$ extends to the substitution $\theta:X_u\rightarrow X_u$.
To verify that the inner diagram commutes, we note that
\[
	\psi_{\alpha, \beta}\theta_\beta \sigma^j (u_\beta)
	= \psi_{\alpha, \beta}\sigma^{jp} (u_\beta)
	= \sigma^{jp} (u_\alpha)
	= \theta_\alpha \sigma^j (u_\alpha)
	= \theta_\alpha \psi_{\alpha,\beta} \sigma^j (u_\beta),
\]
and the continuity of $\psi_{\alpha, \beta}$ and $\theta_\beta$ gives us  the result. The commutativity of the outer diagram follows in a similar way.
\end{proof}

We call the substitution $\theta$ of Theorem~\ref{substitution_theorem} a {\em profinite substitution}, and  the shift   $(X_u, \sigma)$ of  Theorem~\ref{commuting_diagram_lemma_2}  a {\em profinite substitution shift.}

\subsection{Dynamical properties of profinite substitution shifts}

Recall that   the substitution $\theta$ on the finite alphabet $\mathcal A$ is {\em primitive} if there is some 
$k\in \N$ such that for all $a,a'\in \mathcal A$,
the word $\theta^k(a)$ contains $a'$.  For primitive substitutions on a finite alphabet,  any word that appears in any $x \in X_u$ appears in $x$ with bounded gaps~\cite[Chapter 5.2]{queffelec}, and $X_u$ is {\em minimal}: $X_u$ is the $\sigma$-orbit closure of any point in $X_u$. An application of the argument in the second part of the proof of  Theorem~\ref{commuting_diagram_lemma_2}  yields the following.

\begin{proposition}\label{minimal}
Let $u$ be a fixed point of a profinite substitution $\theta=\varprojlim \theta_\alpha$. 
If $\theta_\alpha$ is length-$p$ and  primitive for each $\alpha\geq 0$ then $(X_u,\sigma)$ is minimal.
\end{proposition}

\begin{definition}[{\cite[Definitions 5 and 6]{choksi}}] 
 Let 
  $\{(X_{\alpha}, \mathcal B_\alpha, \mu_\alpha): \alpha\geq 0\}$ be a family of compact Borel measure spaces, and for each $0\leq \alpha\leq \beta$ let  $\psi_{\alpha,\beta}:X_{\beta}\rightarrow X_{\alpha}$ be a continuous map.  If $\mu_\alpha(A) = \mu_{\beta}(\psi_{\alpha, \beta}^{-1}(A))$ for each  $A\in \mathcal B_{\alpha}$ and each $0\leq \alpha\leq \beta$,
 then  $\{(X_{\alpha}, \mathcal B_\alpha, \mu_\alpha):\alpha\geq 0 \}$  is called an  {\em inverse family of (compact) topological measure spaces}.
 
Given an inverse family of compact topological measure spaces, let
\[X\colonequal  \{ x=\varprojlim x_\alpha : x_\alpha \in X_\alpha \mbox{ and }  \psi_{\alpha,\beta} (x_\beta)=x_\alpha \mbox{ for each } 0\leq \alpha\leq \beta  \}.\]
Let    $\psi_{\alpha, \infty}: X\rightarrow X_\alpha$ be the natural projection map, and  let  $\mathcal B_\alpha^*\colonequal \psi_{\alpha,\infty}^{-1}(\mathcal B_\alpha)$. Let $\mathcal B$ be the $\sigma$-algebra generated by  $\bigcup_{\alpha} \mathcal B_\alpha^*$.  Then  $\mu (B)\colonequal \mu_\alpha (\psi_{\alpha, \infty}B)$ for  $B \in \mathcal B_\alpha^*$ defines a finitely additive set function on $\bigcup_{\alpha} \mathcal B_\alpha^*$. If $\mu$ has an extension to $\mathcal B$, then we say that $(X, \mathcal B, \mu)$ is the {\em inverse limit} of $\{(X_{\alpha}, \mathcal B_\alpha, \mu_\alpha): \alpha\geq 0\}$.
\end{definition}

\begin{theorem}[{\cite[Theorems 2.2 and  2.3]{choksi}}]\label{choksi_theorem}
Let   $\{(X_{\alpha}, \mathcal B_\alpha, \mu_\alpha):   \alpha \geq 0    \} $ be an inverse family of compact topological measure spaces. Then 
 the inverse limit $(X, \mathcal B,\mu)$ exists. If $\mu_\alpha$ is Baire for each $\alpha$, then $\mu$ is Baire.
\end{theorem}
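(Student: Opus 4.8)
The plan is to obtain $\mu$ from the Carath\'eodory extension theorem, with a compactness argument in the spirit of Kolmogorov's consistency theorem supplying the essential countable additivity. First I would set up the underlying space: $X = \varprojlim X_\alpha$ is the subset of $\prod_{\alpha\geq 0} X_\alpha$ cut out by the closed conditions $\pi_{\alpha,\alpha+1}(x_{\alpha+1}) = x_\alpha$, hence it is closed, so compact, and metrizable as a subspace of a countable product of compact metric spaces; write $\pi_{\alpha,\infty}\colon X\to X_\alpha$ for the restricted coordinate map. Since $\pi_{\alpha,\infty} = \pi_{\alpha,\beta}\circ\pi_{\beta,\infty}$ whenever $\alpha\leq\beta$, the pullbacks satisfy $\mathcal B_\alpha^*\subseteq\mathcal B_\beta^*$, so $\mathcal A_0\colonequal\bigcup_\alpha\mathcal B_\alpha^*$ is an algebra generating $\mathcal B$. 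I would then record the useful fact that each $\mu_\alpha$ vanishes on $X_\alpha\setminus\pi_{\alpha,\infty}(X)$: indeed, for $\beta\geq\alpha$, the compatibility hypothesis iterated gives $\mu_\alpha\bigl(X_\alpha\setminus\pi_{\alpha,\beta}(X_\beta)\bigr) = \mu_\beta\bigl(\pi_{\alpha,\beta}^{-1}(X_\alpha\setminus\pi_{\alpha,\beta}(X_\beta))\bigr) = \mu_\beta(\emptyset) = 0$, and a standard compactness argument identifies $\pi_{\alpha,\infty}(X) = \bigcap_{\beta\geq\alpha}\pi_{\alpha,\beta}(X_\beta)$. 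Together with the compatibility hypothesis this makes $\mu\bigl(\pi_{\alpha,\infty}^{-1}(A)\bigr)\colonequal\mu_\alpha(A)$ a well-defined, finitely additive set function on $\mathcal A_0$ with total mass $\mu_0(X_0) < \infty$.

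The crucial step, which I expect to be the main obstacle, is to prove that $\mu$ is continuous at $\emptyset$ on $\mathcal A_0$, and hence countably additive there. I would argue by contradiction: given $B_n\in\mathcal A_0$ with $B_n\downarrow\emptyset$ but $\mu(B_n)\geq\varepsilon>0$ for all $n$, write $B_n = \pi_{\alpha_n,\infty}^{-1}(A_n)$; since $\mu_{\alpha_n}$ is inner regular with respect to compact sets (automatic for a finite Borel measure on a compact metric space), choose a compact $K_n\subseteq A_n$ with $\mu_{\alpha_n}(A_n\setminus K_n) < \varepsilon 2^{-n-1}$ and set $C_n\colonequal\pi_{\alpha_n,\infty}^{-1}(K_n)$, a compact subset of $X$ with $C_n\subseteq B_n$ and $\mu(B_n\setminus C_n) < \varepsilon 2^{-n-1}$. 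The finite intersections $D_n\colonequal C_1\cap\cdots\cap C_n$ then lie in $\mathcal A_0$, are compact, are contained in $B_n$, and satisfy
\[
	\mu(B_n\setminus D_n)\;\leq\;\sum_{k=1}^n\mu(B_n\setminus C_k)\;\leq\;\sum_{k=1}^n\mu(B_k\setminus C_k)\;<\;\varepsilon/2,
\]
so $\mu(D_n)>\varepsilon/2$ and in particular $D_n\neq\emptyset$. A nested sequence of nonempty compact sets has nonempty intersection, so $\bigcap_n D_n\neq\emptyset$, contradicting $\bigcap_n D_n\subseteq\bigcap_n B_n=\emptyset$. With countable additivity in hand, the Carath\'eodory extension theorem yields a finite measure on $\mathcal B=\sigma(\mathcal A_0)$ extending $\mu$, which is by definition the inverse limit $(X,\mathcal B,\mu)$.

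For the second assertion I would show that if each $\mathcal B_\alpha$ is the Borel $\sigma$-algebra of $X_\alpha$, then $\mathcal B$ is the Borel $\sigma$-algebra $\mathrm{Borel}(X)$. One inclusion is immediate from continuity of the maps $\pi_{\alpha,\infty}$, which gives $\mathcal B_\alpha^*\subseteq\mathrm{Borel}(X)$ and hence $\mathcal B\subseteq\mathrm{Borel}(X)$. For the reverse inclusion, the subspace topology on $X\subseteq\prod_\alpha X_\alpha$ is generated by the subbasic sets $\pi_{\alpha,\infty}^{-1}(U)$ with $U\subseteq X_\alpha$ open, and since $X$ is second countable, every open subset of $X$ is a countable union of finite intersections of such sets and therefore lies in $\mathcal B$; thus $\mathrm{Borel}(X)\subseteq\mathcal B$ and the two agree.

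Apart from that one compactness step, everything is bookkeeping: that $\mathcal A_0$ is an algebra, that $\mu$ is well defined (which is the content of the compatibility hypothesis, once one knows each $\mu_\alpha$ is concentrated on $\pi_{\alpha,\infty}(X)$) and finitely additive, and the standard identification of product and Borel $\sigma$-algebras for second countable factors. The only genuine input is the inner regularity of the finite Borel measures $\mu_\alpha$ on the compact metric spaces $X_\alpha$, which is precisely what lets the compactness of $X$ be exploited in the continuity-at-$\emptyset$ argument.
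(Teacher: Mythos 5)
The paper does not actually prove this statement --- it is imported verbatim, with citation, from Choksi's work on inverse limits of measure spaces --- so there is no internal proof to compare against. Your argument is correct and is essentially the standard one (and the one in the cited source, specialized from directed systems of compact Hausdorff spaces to sequences of compact metric spaces): the compatibility conditions give a well-defined finitely additive set function on the algebra $\bigcup_\alpha \mathcal B_\alpha^*$, inner regularity of each finite Borel measure $\mu_\alpha$ plus compactness of $X$ yields continuity at $\emptyset$, and Carath\'eodory then supplies the extension to $\mathcal B$; the identification of $\mathcal B$ with the Borel $\sigma$-algebra of $X$ via second countability is also the standard bookkeeping. You correctly isolate the two points that actually need an argument: that $\mu_\alpha$ is concentrated on $\pi_{\alpha,\infty}(X)=\bigcap_{\beta\geq\alpha}\pi_{\alpha,\beta}(X_\beta)$ (needed for well-definedness of $\mu$ on the algebra) and the regularity-plus-finite-intersection-property step; the only caveat worth recording is that your appeal to inner regularity is where the compact \emph{metric} hypothesis enters, whereas in Choksi's more general compact Hausdorff setting regularity of the $\mu_\alpha$ must be assumed.
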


Let $\theta$ be a primitive substitution on a finite alphabet. Then any $\theta$-fixed point $u$, (or any $\theta$-periodic point $u$ in the case where there are no $\theta$-fixed points)  generates the  substitution shift $(X_u,\sigma)$, which is independent of the fixed point chosen. With the Borel $\sigma$-algebra $\mathcal B$, this shift is uniquely ergodic~\cite{michel}.

\begin{corollary}\label{choksi_corollary}
Let $p$ be prime. 
Let $u$ be a fixed point of a profinite substitution $\theta=\varprojlim \theta_\alpha$. 
Suppose that for each $\alpha\geq 0$, the substitution $\theta_\alpha$ is length-$p$ and primitive. 
Then $(X_u, \sigma)$ is uniquely ergodic.
 \end{corollary}

\begin{proof}
Let $u_\alpha$ be the fixed point of $\theta_\alpha$ such that   $\{(X_{u_\alpha},\sigma) : \alpha\geq 0\}$  is the inverse family  of substitution shifts generated by $(\theta_\alpha)_{\alpha\geq 0}$.
 As $\theta_\alpha$ is primitive,  we let $\mu_\alpha$ denote the unique $\sigma$-invariant measure for $(X_{u_\alpha}, \mathcal B_\alpha, \sigma   )$ with $\mathcal B_\alpha$ the Borel $\sigma$-algebra.   Then 
$\mu_\alpha= \mu_{\beta}\circ\pi_{\alpha, \beta}^{-1}$ on cylinder sets,  so that $\mu_\alpha= \mu_{\beta}\circ\pi_{\alpha, \beta}^{-1}$, i.e.\ $\{(X_{u_\alpha}, \mathcal B_\alpha, \mu_\alpha) : \alpha \geq 0\} $ is 
an inverse family of topological measure spaces. We apply 
  Theorem~\ref{choksi_theorem} to the inverse family   $\{(X_{u_\alpha}, \mathcal B_\alpha,  \mu_\alpha, \sigma): \alpha\geq 0\}$ 
    to conclude  that the inverse limit $\mu=\varprojlim \mu_\alpha$ exists on the $\sigma$-algebra $\mathcal B$  generated by  $\bigcup_{\alpha} \mathcal B_\alpha^*$,  with 
  $\mathcal B_\alpha^*\colonequal \psi_{\alpha,\infty}^{-1}(\mathcal B_\alpha)$. 
   Furthermore since each $\mu_\alpha$ is $\sigma$-invariant, then  $\mu$ is $\sigma$-invariant on each $\mathcal B_\alpha^*$ and this implies that $\mu$ is 
$\sigma$-invariant on $\mathcal B$.

 Let $\nu$ be any other $\sigma$-invariant measure on $(X_u, \mathcal B)$. Then for any $\alpha$,
 $\nu_\alpha (A)\colonequal\nu (\psi_{\alpha, \infty}^{-1}(A))$, $A \in \mathcal B_\alpha$ defines a measure on $X_{u_\alpha}$. This measure $\nu_\alpha$ is also $\sigma$-invariant,
and  the unique ergodicity of  
$(X_{u_\alpha}, \sigma)$ for each $\alpha\geq 0$ implies that $\nu_\alpha=\mu_\alpha$ for each $\alpha\geq 0$, which gives  $\nu=\mu$. 
\end{proof}

Next we extend results  of Dekking~\cite{Dekking}  concerning the discrete spectrum of constant-length substitution shifts. We refer to his article for all relevant definitions and to \cite{Downarowicz} for the definition of an odometer. The {\em maximal equicontinuous factor} of a topological dynamical system is  a rotation on a compact abelian group that is determined by  the collection of {\em continuous} eigenvalues of the system. An eigenvalue is continuous if there exists a continuous eigenfunction for that eigenvalue.  The set of continuous eigenvalues is generally a proper subset of the set of measurable eigenvalues. For primitive constant-length substitution shifts, the continuous and measurable eigenvalues coincide~\cite{Dekking}.

\begin{theorem}\label{discrete_spectrum}
Let $p$ be prime. 
Let $u$ be a fixed point of a profinite substitution $\theta=\varprojlim \theta_\alpha$, with each $\theta_\alpha$  length-$p$ and  primitive. Let $u_\alpha$ be a fixed point of $\theta_\alpha$, so that 
 $(X_u,  \mathcal B, \mu, \sigma)  =  \varprojlim \, (X_{u_\alpha},  \mathcal B_\alpha, \mu_\alpha, \sigma )$.
  Then the maximal equicontinuous factor of $(X_u,   \sigma )$ is an odometer, and every measurable eigenvalue is a continuous eigenvalue.
\end{theorem}

\begin{proof}
First suppose that there exists $\alpha$ such that  the  sequence $u_\alpha$ is not eventually periodic; then  for each $\beta\geq \alpha$,  $u_\beta$ is not eventually periodic. 
In this case Dekking's results~\cite{Dekking}  
tell us that  there exists some $\alpha_0$ such that for each $\alpha\geq \alpha_0$, $(\Z_p \times \Z/(h_\alpha \Z), +(1,1))$ is the maximal equicontinuous factor  of $(X_{u_\alpha},  \sigma)$, for some $h_\alpha$ coprime to $p$, and also that every measurable eigenvalue is a continuous eigenvalue. Note that  $h_\alpha \mid h_{\alpha+1}$ for each $\alpha \geq 0$. Without loss of generality, we suppose that $\alpha_0=1$.  Let $(\mathcal Z,+1)$ be the odometer formed by the sequence $ (h_{\alpha+1}/h_\alpha)_{\alpha\geq 0}$.
Then $(\Z_p \times \Z/(h_\alpha \Z), +(1,1))_{\alpha \geq 0}$ forms an inverse limit system in the category of group rotations,  and $\left(\Z_p \times \mathcal Z, +(1,1)\right)$ is its inverse limit.
Hence $\left(\Z_p \times \mathcal Z, +(1,1)\right)$ is an equicontinuous factor of the inverse limit system $(X_u,\sigma)$. If for each $\alpha \geq 0$, $u_\alpha$ is eventually periodic, with period $h_\alpha$, then the above argument follows through, except that in this case $(\mathcal Z,+1)$ is an equicontinuous factor of $(X_u,  \sigma)$.

Given a nontrivial measurable eigenfunction $f$ of   $(X_u,  \mathcal B, \mu, \sigma)$, with eigenvalue $\lambda$,
let $f_\alpha \colonequal \mathbb E(f \mid \mathcal B_{\alpha}^*)$ denote the conditional expectation of $f$ given $\mathcal B_{\alpha}^{*}$. Then $f_\alpha$ is isomorphic to  an eigenfunction of 
$(X_{u_\alpha}, \mathcal B_\alpha,  \sigma, \mu_\alpha)$
with eigenvalue $\lambda$, and $f_\alpha$ is nontrivial for large $\alpha$. For primitive substitutions, every measurable eigenvalue is a continuous eigenvalue. Also, $f_\alpha \rightarrow f$ $\mu$-almost everywhere, so that any measurable eigenvalue of $(X_u,  \mathcal B, \mu, \sigma)$, and hence any continuous eigenvalue, has already contributed to  the maximal equicontinuous factor  $\left(\Z_p \times \Z/(h_\alpha \Z),  +(1,1)\right)$  (or $(\Z/(h_\alpha \Z), +1)$) of $(X_{u_\alpha}, \sigma)$ for large enough $\alpha$.  This completes the proof that $\left(\Z_p \times \mathcal Z, +(1,1)\right)$ (or $(\mathcal Z,+1)$) is the maximal equicontinuous factor of $(X_u,   \sigma )$.
\end{proof}

\begin{example}\label{Fibonacci}
The Fibonacci sequence  $F=F(n)_{n\geq 0} = 0, 1, 1, 2, 3, 5, 8, 13, \dots$ is periodic when reduced modulo $m$ for any $m\geq 1$; hence for any prime $p$ and any $\alpha\geq 0$, $(F(n) \bmod p^\alpha)_{n\geq 0}$ is $p$-automatic.
Note that  because of the repeated $F(1) = F(2) = 1$, the letter-to-letter coding $\tau$ for which $F=\tau(u)$ is not the identity map.
For example, let $p = 2$, consider the alphabet  $\mathcal S = \{0, s, 1, 2, 3, 5, 8, 13, \dots\}$, where we are using integers as state names for convenience.
Let $\tau(s) = 1$ and $\tau(m) = m$ for all $m \in \N$.
Then the sequence $u = 0, s, 1, 2, 3, 5, 8, 13, \dots$ satisfies $F = \tau(u)$.
Moreover, the profinite substitution $\theta$ satisfies
\begin{align*}
	\theta(0) &= 0 \, s \\
	\theta(s) &= 1 \, 2 \\
	\theta(F(m)) &= F(2m) \, F(2m+1) \quad \text{for $m \geq 2$},
\end{align*}
and $u$ is a fixed point of $\theta$.
For each $\alpha$, the sequence $F \bmod 2^\alpha$ is the coding of a primitive substitution $\theta_\alpha$.
For example, the fixed point of the substitution $\theta_2 (0)= \theta_2(2)=0a$, $\theta_2(a)= \theta_2(3)= b 2$, $\theta_2 (b)=\theta_2(c)=3c$ and  projects  to $F\bmod{4}= 0,1,1,2,3,1\ldots$ via the coding $\tau_2(a)=\tau_2(b)=\tau_2(c)=1$. 

Let $\ell(m)$ denote the (minimal) period length of $(F(n) \bmod m)_{n \geq 0}$.
For prime $p$, Wall~\cite[Theorem~5]{Wall} showed that if $e$ is the smallest positive integer such that $\ell(p^e) \neq \ell(p)$, then $\ell(p^\alpha) = p^{\alpha+1-e} \ell(p)$ for $\alpha \geq e$. For $p=2$ we have $\ell(2)=3$ and $e=2$.
For each $\theta_\alpha$ then, $(X_{u_\alpha}, \sigma)$ is conjugate to the finite group $\left(\Z/(2^{\alpha-1}\Z) \times \Z/(3\Z), +(1,1)\right)$, and  so $\left(\Z_2 \times \Z/(3\Z), +(1,1)\right)$ is the maximal equicontinuous factor of $(X_u,\sigma)$. Furthermore, the maximal equicontinuous factor of $(X_a, \sigma)$, which must be contained in that of $(X_u,\sigma)$,  is also $\left(\Z_2 \times \Z /(3\Z), +(1,1)\right)$.
\end{example}

We end by showing that profinite substitutions are {\em recognizable}. Let $(X_u,\sigma)$ be the shift generated by a fixed point of  the profinite substitution $\theta$.
We say that $\theta$ is {\em recognizable} if for any 
$y\in X_u$ there is a unique way to write $y=\sigma^k(\theta (x))$ with $x \in X_u$ and $0\leq k< |\theta(x(0))|$.  Moss\'{e}~\cite{Mosse} showed that if $\theta$ is primitive and generates an aperiodic fixed point $u$, and $X_u$ is the {\em two-sided} shift generated by $\theta$, then $\theta$ is recognizable.  The two-sided shift can be thought of in two equivalent ways. It can be defined as the set of  bi-infinite sequences, all of whose subwords are a subword of $\theta^n(a)$ for some $n\geq 1$ and some letter $a$. It can also defined as the {\em natural extension} of the one-sided shift. In other words,  if $\tilde X_u$ is the one-sided shift, then the two-sided shift  $X_u$ is defined as 
\[ X_u\colonequal\{x=\varprojlim  x_\alpha: x_\alpha \in \tilde X_u \mbox{ and }   \sigma(x_{\alpha +1}) = x_\alpha  \mbox{ for each } \alpha\geq 0 \}.\]
If $(\tilde X_{u_\alpha},\sigma)_{\alpha \geq 0}$ is an inverse family of one-sided shifts,  then their natural extensions $(X_{u_\alpha},\sigma)_{\alpha \geq 0}$ form an inverse family of (two-sided) shifts.

\begin{theorem}\label{recognizable}
Let $p$ be prime, and 
let $\theta$ be a profinite substitution $\theta=\varprojlim \theta_\alpha$ that generates an inverse family of two-sided shifts $\{(X_{u_\alpha}, \sigma) : \alpha \geq 0\}$. Suppose that each $\theta_\alpha$ is length-$p$ and primitive. Then $\theta$ is recognizable.
\end{theorem}

\begin{proof}Let $x =\varprojlim x_\alpha\in \varprojlim X_{u_\alpha}$. 
Moss\'{e}'s theorem tells us that each $x_\alpha$ can be written in a unique way as $x_\alpha = \sigma^{k_\alpha} \theta_\alpha (y_\alpha)$ with $0\leq k_\alpha < p$.
Then
\[  \sigma^{k_\alpha}\theta_\alpha(y_\alpha)= x_\alpha = \psi_{\alpha,\beta}(x_\beta) =    \psi_{\alpha,\beta }  \sigma^{k_\beta}\theta_\beta(y_\beta) = \sigma^{k_\beta} \psi_{\alpha,\beta} \theta_\beta (y_\beta),  \]
where the final equality follows since  $\psi_{\alpha,\beta}$ is defined termwise.
Hence, since $\psi_{\alpha, \beta}$ commutes with $\theta_\beta$ for each $0 \leq \alpha \leq \beta$, we have $k_\beta = k_\alpha $ and  $\psi_{\alpha,\beta}(y_\beta) = y_\alpha$.
Let $k\colonequal k_\alpha$ and $y\colonequal\varprojlim y_\alpha$; then $x=\sigma^k \theta (y)$. If $x=\sigma^{k'} \theta (y')$, then for each $\alpha\geq 0$ we have $x_\alpha= \psi_{\alpha, \infty} (x) = \psi_{\alpha, \infty} (\sigma^{k'} \theta (y')     ) = \sigma^{k'} \theta_\alpha( \psi_{\alpha, \infty} ( y' ))$, so that uniqueness implies that $k=k'$ and   $y_\alpha= \psi_{\alpha, \infty} ( y' )$.
\end{proof}

\subsection{Limit sets of profinite substitutions}\label{limit sets}

Given a profinite substitution $\theta$, which defines a shift $(X_u, \sigma)$, we define the {\em limit set} $\mathcal L(\theta) \colonequal \bigcap_{n\geq 0} \theta^n (X_u)$.
The limit set is a nonempty compact set.
If $\theta$ is a substitution on a finite alphabet, then $\mathcal L(\theta)$  contains only  the periodic  points of $\theta$, of which there are finitely many.
However, when we consider substitutions on an infinite alphabet, $\mathcal L(\theta)$ consists of inverse limits of $\theta_\alpha$-periodic points, and the period lengths can increase. For example, let $a$ be the coding of a fixed point $u$ of a profinite substitution $\theta$. Define $\varphi :\Z_p^\N \rightarrow \Z_p$ by $\varphi(x(n)_{n\geq 0})= x(0)$. 
We have, for natural numbers $n$, $k$, and $r<p^n$,
\[
	a(kp^n+r) = \varphi \sigma^{kp^n+r}(a) =  \varphi \sigma^{kp^n+r}( \tau (u))=\varphi   \sigma^r \tau \sigma^{kp^n}(u) = \varphi \sigma^r \tau\!\left(\theta^n \sigma^k (u)\right) ,
\]
so that
\[
	a(kp^n+r)\in  \varphi \sigma^r \tau\!\left(\bigcap_{j\leq n}\theta^j (X_u)\right).
\]
Hence if $\lim_{n \to \infty} a(kp^n+r)$ exists, then 
 $\mathcal L(\theta)$ contains points other than $u$.
 For example, as mentioned in Section~\ref{Introduction}, the sequence $C(2^n)_{n \geq 0}$ converges in $\Z_2$, where $C(n)$ is the $n$th Catalan number.
More generally, we have the following.

\begin{proposition}[{\cite[Corollary~3.1]{michel_miller_rennie}}]\label{Catalan limits}
Let $p$ be prime, and let $C(n)$ be the $n$th Catalan number.
For each $k, r \in \Z$ with $k \geq 1$, the limit $\lim_{n \to \infty} C(kp^n +r)$ exists in $\Z_p$.
\end{proposition}

We have a similar result for the Fibonacci sequence.
More generally, we have such limits for any sequence satisfying a linear recurrence
\[
	a(n + \ell) + c_{\ell - 1} a(n + \ell - 1) + \dots + c_1 a(n + 1) + c_0 a(n) = 0
\]
with constant coefficients $c_i \in \Z_p$.
The \emph{characteristic polynomial} of this sequence is $x^\ell + \dots + c_1 x + c_0$.

\begin{proposition}[{\cite[Corollary~11]{Rowland--Yassawi_constant-recursive}}]\label{Fibonacci limits}
Let $p$ be prime, and let $a(n)_{n \geq 0}$ be a constant-recursive sequence of $p$-adic integers with monic characteristic polynomial $g(x) \in \Z_p[x]$.
There exists an integer $f \geq 1$ such that, for each $k, r \in \Z$ with $k \geq 1$, the limit $\lim_{n \to \infty} a(k p^{f n} + r)$ exists and is algebraic over $\Q_p$.
\end{proposition}

A suitable integer $f$ can be given explicitly as follows.
Let $K$ be a degree-$d$ splitting field of $g(x)$ over $\Q_p$ with ramification index $e$; then we can take $f = d/e$.
For example, for the Fibonacci sequence and $p = 2$ as pictured in Section~\ref{Introduction}, we obtain the value $f = 2$,
and the two limit points are $\pm \sqrt{-\frac{3}{5}}$ in $\Z_2$.

Unlike Proposition~\ref{Catalan limits}, the limit in Proposition~\ref{Fibonacci limits} comes from an \emph{approximate twisted interpolation} of the sequence $a(n)_{n \geq 0}$ to the relevant extension of $\Q_p$.
Amice and Fresnel~\cite{Amice--Fresnel} give an alternate characterization of sequences which have twisted interpolations.

\subsection{Cocycle sequences} \label{Cocycle sequences}

The examples we have worked with so far consist of sequences of $p$-adic integers whose generating function is algebraic over $\mathbb Z_p(x)$. In this section we show that certain {\em cocycle sequences}  are codings of the fixed point of a length-$p$ profinite substitution. Let $M_p$
 be the $p \times p$ matrix all of whose entries are $1$. This matrix is the incidence matrix for the substitution $\theta^*$ on the alphabet $\Z / (p\Z) $ defined as 
$\theta^*(j) = 01 \cdots (p-1)$ for each $j \in \Z / (p\Z)$, whose fixed point is periodic.

 We say that $\theta$ is {\em aperiodic} if it has a fixed point which is not 
 periodic. Let $\theta $ be any  aperiodic substitution on $\Z / (p\Z) $ whose incidence matrix is $M_p$.
 Let us assume also that $\theta(0)$ starts with 0, and let $u=0\cdots$ be the fixed point starting with 0.  Let  $\mu$ be the unique measure that is preserved by $\sigma$. Then the  shift 
 $(X_u, \sigma)$ has a {\em Bratteli--Vershik}
representation $(X_B,  \varphi_\theta)$~\cite{LV}, where $B$  is a  {\em Bratteli diagram} and $ \varphi_\theta$ is a {\em Vershik map:}
We briefly describe these objects.
The Bratteli diagram $B$ is an infinite directed graph, and for our example, 
we illustrate $B$ in Figure~\ref{Bratteli_diagram} for the case  $p=3$.

\begin{figure}[h]
\centerline{\includegraphics[scale=0.5]{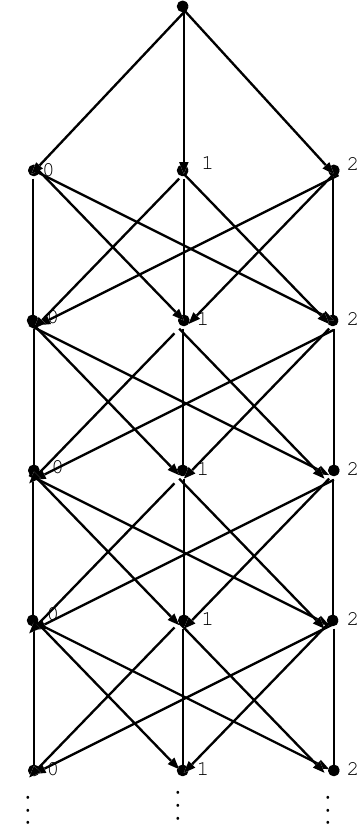}}
\caption{The Bratteli diagram associated with $M_3$.}
 \label{Bratteli_diagram}
\end{figure}

Apart from the root vertex at the top of the diagram, there are $p$ vertices at each level $n$, which we label $0, \ldots, p-1$, moving from left to right.
The levels are indexed by increasing indices $n$ as we move down in the diagram, $n=0, 1, \ldots$; we do not think of the root vertex as occupying a level. The edge structure for $B$ is determined by the matrix $M_p$.
Namely, the number of edges from vertex $i$ on level $n-1$ to vertex $j$ on level $n$ is the $(i,j)$ entry of $M_p$.
The substitution $\theta$  defines a linear order on the incoming edges to any vertex: if the vertex is labelled  $j$ and $\theta(j)\colonequal i_0 \cdots i_{p-1}$, then we give the edge with source $i_k$ the label $k$.
Let $X_B$ be the set of infinite paths in $B$ starting at the root vertex.
Such a path is labelled $x = x_0, x_1, \ldots $ where $x_n$ is the label of the edge from level $n$ to level $n+1$.
The linear order on the incoming edges to a vertex defines a partial order on  $X_B$. Namely, we can compare two infinite paths  $x$ and $x'$ in $B$ if and only if  they eventually agree: if $n$ is the smallest integer such that $x$ and $x'$ agree from level $n$ onwards, then we write  $x<x'$ if $x_{n-1} < x_{n-1}'$.
If $x$ is a non-maximal path (i.e.\ one of its edges is not maximally labelled) then it has a {\em successor} in this order. Namely, if $n$ is the smallest integer such that  $x_n$  is not a maximal edge, then the successor of $x$ agrees with $x$ from level $n+1$, is minimal up to level $n$, and the edge between level $n$ and level $n+1$ is the successor of $x_n$.

Thus any substitution $\theta$ with incidence matrix $M_p$ defines a partial ordering of 
$X_B$ and this determines a {\em Vershik map} $\varphi_\theta :X_B \rightarrow X_B$ where $\varphi_\theta(x)$ is defined to be the successor of $x$ in the ordering determined by $\theta$. Note that $\varphi$ is not defined on the set of maximal paths, but this is a finite set, and here we define it arbitrarily.

If $\theta$ is an aperiodic primitive substitution, then $(X_u, \mathcal B, \mu, \sigma)$  is  measurably conjugate to $(X_B, \overline{\mathcal B}, \overline { \mu}, \varphi_\theta )$  with $\overline 
{\mathcal B}$ the $\sigma$-algebra generated by cylinder sets on $X_B$ and $\overline \mu$ the image of $\mu$ via the conjugacy.
If $\theta$ is a periodic substitution (as $\theta^*$ is), then $(X_B,\varphi_\theta)$ is topologically conjugate to the $p$-adic odometer $(\Z_p, +1)$.  In this latter case,  if the finite path $x$ has edges labelled by the base-$p$ expansion of $m$, then $\varphi_{\theta^*}^n(x)$ is the finite path whose edges are labelled by the base-$p$ expansion of $m+n$.  
We refer the reader to \cite{LV} for details.

Note that the ordering induced by $\theta^*$ on $B$ has the special property that an edge labelled $i$ has as source a vertex labelled $i$.

Suppose that $\theta$ is aperiodic and $\theta(0)$ starts with $0$.  Let $0^\infty$ denote the minimal path in $B$ that runs through the vertices labelled $0$, and  let the sequence $s(n)_{n\geq 0}$ of natural numbers be defined by 
\[      \varphi_\theta^n(0^\infty) = \varphi_{\theta^*}^{s(n)}  (0^\infty)  .\]
The sequence $s(n)_{n\geq 0}$ is called a {\em cocycle}.

\begin{theorem}\label{cocycle}
Let $p$ be prime.
Let $M_p$ be the $p \times p$ matrix all of whose entries are $1$.
Let $\theta$ be an aperiodic substitution with incidence matrix $M_p$, and let $\theta^*(j) = 01 \cdots (p-1)$ for each $j \in \Z / (p\Z)$.
Suppose $\theta(0)$ starts with $0$.
Let $s(n)_{n \geq 0}$ be the cocycle defined by $\theta$ and $\theta^*$.
Then $(s(n) \bmod p^\alpha)_{n \geq 0}$ is the fixed point of a length-$p$ substitution $\theta_\alpha$ for every $\alpha \geq 0$.
\end{theorem}

\begin{proof}
Note that if the finite path $x_0 x_1 \cdots x_k$ passes through the vertices $v_0 v_1 \cdots v_{k+1}$, and if $x_0 x_1 \cdots x_k = \varphi_\theta^n (00 \cdots 0)  $, then 
$s(n) = \sum_{j=0}^{k+1}p^jv_j$, and $s(n) \bmod p^\alpha =  \sum_{j=0}^{\alpha-1}p^jv_j$.

Given $\alpha\geq 0$, we define a  substitution $\theta_\alpha$ on $ \Z/(p^\alpha \Z)$ of length-$p$ as follows. Given $j = j _0 p^0 +j_1 p^1 + \cdots + j_{\alpha - 1}p^{\alpha - 1} \in \Z/(p^\alpha \Z)$,
define $\theta_\alpha (j) = \theta(j_0) + p (  j _0 p^0 +j_1 p^1 + \cdots + j_{\alpha - 2}p^{\alpha - 2}    )$, where here we are adding $p (  j _0 p^0 +j_1 p^1 + \cdots + j_{\alpha - 2}p^{\alpha - 2} )$ to each entry in the word    $\theta(j_0)$. We claim that 
$(s(n) \bmod p^\alpha)_{n \geq 0}$ is a fixed point of $\theta_\alpha$.

To see this, we need to show that for each $n$, $\theta_\alpha (s(n) \bmod p^\alpha)= (s(pn), s(pn+1), \ldots, s(pn+p-1))\bmod p^\alpha$. To get  $s(pn+\ell) \bmod p^\alpha$, we need  the first $\alpha$ vertices through which the path $\varphi_{\theta}^{pn+\ell}(00\cdots 0)$ runs. Suppose that the path $\varphi_{\theta}^{n}(00\cdots 0)$ passes through the vertices $v_0, v_1, \ldots, v_{\alpha-1}$, so that 
$s(n) \bmod p^\alpha= \sum_{j=0}^{\alpha-1}v_j p^{j}$.
Recall that we use the notation $\theta (a)_j$ to denote the $j$-th letter of $\theta (a)$.
Then the  path $\varphi_{\theta}^{pn+\ell}(00\cdots 0)$ starts at the vertex labelled $\theta(v_0)_\ell$, followed by $v_0, \ldots, v_{\alpha-2}$ at levels $1, \ldots , \alpha-1$ of the diagram respectively. In other words, $s(pn+\ell) \bmod p^\alpha= \theta(v_0)_\ell + p \sum_{j=0}^{\alpha-2}v_jp^{j}
= \theta_\alpha (s(n) \bmod p^\alpha)_\ell$, as desired.
\end{proof}

 \begin{remark}
 Since cocycle sequences are bijections of $\N$, it is very easy to define the cocycle sequence as the fixed point of a length-$p$ substitution on $\N$. For example, if $p=2$ and $\phi(0)=01, $ $\phi(1)=10$ is the Thue--Morse substitution, then it has as transition matrix $M_2$ and its cocycle sequence
\[
	s(n)_{n \geq 0} = 0, 1, 3, 2, 7, 6, 4, 5, 15, 14, 12, 13, 8, 9, 11, 10, \dots
\]
is the fixed point of  the length-2 substitution $\theta$ on $\N$  defined by 
\[
\theta(m)=
	\begin{cases}
		(2m) \, (2m+1)	& \mbox{if $m$ is even}  \\
		(2m+1) \, (2m)	& \mbox{if $m$ is odd}.
	\end{cases}
\]
In particular, $s(n)_{n \geq 0}$ projects modulo $2$ to the Thue--Morse sequence.
However, it can be shown that $s(n)_{n\geq 0}$ is $2$-regular in the sense of Allouche and Shallit~\cite{Allouche--Shallit}; namely, we have the recurrence
\begin{align*}
	s(4 n) &= -2 s(n) + 3 s(2 n) \\
	s(4 n + 1) &= -2 s(n) + 2 s(2 n) + s(2 n + 1) \\
	s(4 n + 2) &= -2 s(n) + 3 s(2 n + 1) \\
	s(4 n + 3) &= -2 s(n) + s(2 n) + 2 s(2 n + 1).
\end{align*}
It follows that $(s(n) \bmod k)_{n \geq 0}$ is $2$-automatic for every $k \geq 2$~\cite[Corollary~2.4]{Allouche--Shallit}.
Therefore, by a theorem of Cobham, for a prime $p \neq 2$ the sequence $(s(n) \bmod p^\alpha)_{n \geq 0}$ is not $p$-automatic unless it is eventually periodic.
Moreover, the generating function $\sum_{n \geq 0} s(n) x^n$ is not rational,
so it follows from a result of B\'ezivin~\cite{Bezivin, Bell--Coons--Rowland} that $s(n)_{n \geq 0}$ is not algebraic, nor is it the diagonal of a rational function.

\end{remark}

\section*{Acknowledgments}
The authors acknowledge the hospitality and support of LaCIM, Montr\'{e}al and LIAFA, Universit\'{e} Paris 7.
We also thank the referee for excellent comments and suggestions.

{\footnotesize
\bibliographystyle{alpha}
\bibliography{bibliography}
}

\end{document}